\newtheorem{theorem}{Theorem}[section]
\newtheorem{lemma}[theorem]{Lemma}
\newtheorem{definition}[theorem]{Definition}
\newtheorem{remark}[theorem]{Remark}
\numberwithin{equation}{section}
\newcommand{\normmm}[1]{{\left\vert\kern-0.25ex\left\vert\kern-0.25ex\left\vert #1
    \right\vert\kern-0.25ex\right\vert\kern-0.25ex\right\vert}}
\newcommand{\xr}{\widetilde{x}} \newcommand{\yr}{\widetilde{y}}
\newcommand{\curl}{\bm{\mathrm{curl}}}
\begin{document}
\title{A nodal type polynomial finite element exact sequence over quadrilaterals
\thanks{This project is supported by NNSFC (Nos.~61733002, 61572096, 61432003, 61720106005, 61502107) 
and ``the Fundamental Research Funds for the Central Universities''. }}

\author[a]{Xinchen Zhou\thanks{Corresponding author: dasazxc@gmail.com}}
\author[b]{Zhaoliang Meng}
\author[c]{Xin Fan}
\author[b,c]{Zhongxuan Luo}

\affil[a]{\it \small Faculty of Electronic Information and Electrical Engineering, Dalian University of Technology, Dalian 116024, China}
\affil[b]{\it \small School of Mathematical Sciences, Dalian
University of Technology, Dalian, 116024, China}
\affil[c]{\it School of Software, Dalian
University of Technology, Dalian, 116620, China}


\maketitle
\begin{abstract}
This work proposes two nodal type nonconforming finite elements over convex quadrilaterals,
which are parts of a finite element exact sequence.
Both elements are of 12 degrees of freedom (DoFs) with polynomial shape function spaces selected.
The first one is designed for fourth order elliptic singular perturbation problems,
and the other works for Brinkman problems.
Numerical examples are also provided.
\\[6pt]
\textbf{Keywords:} Nodal type; polynomial finite element; exact sequence; quadrilateral meshes.
\end{abstract}

\section{Introduction}

Let $\Omega\subset\mathbb{R}^2$ be a simply connected Lipschitz domain.
The de Rham complex determined by the following exact sequence
\begin{equation}
\label{e: countinuous Stokes complex}
\begin{tikzcd}[column sep=large, row sep=large]
0 \arrow{r} & H^2(\Omega) \arrow{r}{\mbox{\textbf{curl}}}
& \left[H^1(\Omega)\right]^2 \arrow{r}{\mbox{div}}& L^2(\Omega)\arrow{r}&0,
\end{tikzcd}
\end{equation}
also known as the Stokes complex,
is well understood and widely applied in the analysis for many problems in solid and fluid mechanics.
Typical model problems are biharmonic and Stokes problems,
whose solutions can be efficiently approximated by suitable finite element methods.
In particular, a divergence-free Stokes element often plays a role in a certain discretization of
(\ref{e: countinuous Stokes complex}) with some biharmonic element.
A comprehensive review on this topic can be found in \cite{John2016}.
Generally speaking, there are three types of finite element exact sequences approximating (\ref{e: countinuous Stokes complex}).
The first type are completely conforming,
namely, all their components are subspaces of the corresponding forms in (\ref{e: countinuous Stokes complex}).
Typical examples include the sequences derived from the Argyris element \cite{Falk2013},
the singular Zienkiewicz element \cite{Guzman2014},
the Bogner-Fox-Schmit element \cite{Neilan2016}
and the family from spline or macroelements \cite{Christiansen2016b, Fu2018, Arnold1992}, etc.
The second type are semi-conforming, that is, their $0$-forms are $H^2$-nonconforming but $H^1$-conforming,
and their $1$-forms are $H^1$-nonconforming but $H(\mathrm{div})$-conforming.
The sequence constructed via the modified Morley element \cite{Nilssen2001,Mardal2002}
and its higher order extension \cite{Guzman2012} are of this type.
The rectangular Adini element was also recently adopted to formulate an exact sequence \cite{Gillette2018}
as well as the modified nonconforming Zienkiewicz element \cite{Wang2007} on triangles.
The third type are completely nonconforming.
Perhaps the simplest example is the Morley-Crouzeix-Raviart sequence \cite{Morley1968, Crouzeix1973},
whose higher order extension has been recently discovered in \cite{Zhang2018}.
Again this construction has also been extended to the rectangular case \cite{Wang2013,Zhang2009}.
Although all the three types are successful for the discretization of (\ref{e: countinuous Stokes complex}),
for fourth order elliptic singular perturbation problems and Brinkman problems for porous media flow,
only the first two types are sufficiently regular,
while the third type might fail if the mesh is not regular and symmetric enough.
In such a case, the required finite element sequence must approximate not only (\ref{e: countinuous Stokes complex})
but also the following de Rham complex
\begin{equation}
\label{e: de Rham complex}
\begin{tikzcd}[column sep=large, row sep=large]
0 \arrow{r} & H^1(\Omega) \arrow{r}{\mbox{\textbf{curl}}}
& \bm{H}(\mathrm{div};\Omega)\arrow{r}{\mbox{div}}& L^2(\Omega)\arrow{r}&0.
\end{tikzcd}
\end{equation}
Indeed, the modification \cite{Nilssen2001,Mardal2002} for the Morley-Crouzeix-Raviart sequence is a compromise for this dilemma.

Note that all the aforementioned examples are designed for triangular or rectangular meshes.
However, there are fewer researches on the approximation for (\ref{e: countinuous Stokes complex})
and (\ref{e: de Rham complex}) over general convex quadrilaterals,
on which we will give a brief review.
For the first type approximation for (\ref{e: countinuous Stokes complex}),
the $H^2$-conforming Fraijes de Veubeke-Sander element
\cite{Ciavaldini1974, Verbeke1968} is a successful candidate for biharmonic problems.
Owing to a normal aggregation trick,
a subspace method, namely, the reduced Fraijes de Veubeke-Sander element was designed \cite{Ciarlet1978}.
For $H^1$-conforming approximation of the incompressible flow,
Neilan and Sap \cite{Neilan2018} introduced a divergence-free Stokes element from the Fraijes de Veubeke-Sander element.
As far as the second type approximation is concerned,
Bao et al.~\cite{Bao2018} proposed a $H^1$-conforming element for fourth order singular perturbation problems
by enriching a spline element space by bubble functions.
Note that all these elements are spline-based, and so a cell-refinement procedure cannot be avoided.
Comparatively, polynomial shape functions are simple to represent and easy to compute,
and therefore they are often more preferred.
This has been taken into consideration for the third type approximation.
Utilizing the Park-Sheen biharmonic element \cite{Park2013},
Zhang \cite{Zhang2016} generalized the Morley-Crouzeix-Raviart sequence to general quadrilaterals,
but again there is no evidence of its ability to approximate (\ref{e: de Rham complex}).
Recently, a polynomial modification was proposed by Zhou et al.~\cite{Zhou2018},
which works for both (\ref{e: countinuous Stokes complex}) and (\ref{e: de Rham complex}).
We must point out that, the Adini complex \cite{Gillette2018} and the rectangular Morley complex \cite{Wang2013,Zhang2009}
are also successful for the discretization of both (\ref{e: countinuous Stokes complex}) and (\ref{e: de Rham complex}),
but their convergence severely relies on the regularity and symmetry of the rectangular cell,
and therefore cannot be generalized to arbitrary convex quadrilaterals in a obvious manner.
Moreover, we discover that the number of global DoFs of the reduced Fraijes de Veubeke-Sander element \cite{Ciarlet1978} is
significantly less than those of the semi-conforming \cite{Bao2018} and completely nonconforming counterparts \cite{Zhou2018}
benefitting from the nodal type structure.

This work devotes to the construction of a nonconforming finite element exact sequence 
on general convex quadrilateral meshes for approximating both
(\ref{e: countinuous Stokes complex}) and (\ref{e: de Rham complex}),
enjoying the advantages that the elements therein are of nodal type structure,
and their shape functions are polynomials.
In fact, the $0$-form dealing with fourth order elliptic singular perturbation problems
is, in a pseudo $H^1$-conforming manner with respect to (\ref{e: de Rham complex}),
a direct generalization of the modified nonconforming Zienkiewicz element \cite{Wang2007} due to Wang, Shi and Xu.
The DoFs are values and gradients of at vertices.
For the $1$-form designed for Brinkman problems,
we select vertex values and edge normal means as the DoFs.
Optimal and uniform error estimates are also given for both elements with respect to their associated model problems.
From some numerical tests,
one can observe that the performances of both elements are consistent with our theoretical findings.  

The rest of this work is arranged as follows.
In Section \ref{s: 0-form},
the nonconforming finite element working for fourth order elliptic singular perturbation problems 
is defined on quadrilateral meshes. 
Section \ref{s: 1-form} introduces the element for Brinkman problems,
and shows that both the two elements are parts of a finite element exact sequence.
Numerical examples are given in Section \ref{s: numerical examples} to verify the theoretical analysis. 

Throughout the work,
standard notations in Sobolev spaces are adopted.
For a domain $D\subset\mathbb{R}^2$,
$\bm{n}$ and $\bm{t}$ will be the unit outward normal and tangent vectors on $\partial D$, respectively.
The notation $P_k(D)$ denotes the usual polynomial space over $D$ of degree no more than $k$.
The norms and semi-norms of order $m$ in the Sobolev spaces $H^m(D)$
are indicated by $\|\cdot\|_{m,D}$ and $|\cdot|_{m,D}$, respectively.
The space $H_0^m(D)$ is the closure in $H^m(D)$ of $C_0^{\infty}(D)$.
We also adopt the convention that $L^2(D):=H^0(D)$,
where the inner-product is denoted by $(\cdot,\cdot)_D$.
These notations of norms, semi-norms and inner-products also work for vector- and matrix-valued Sobolev spaces,
where the subscript $\Omega$ will be omitted if the domain $D=\Omega$.
Moreover, the positive constant $C$ independent of the mesh size $h$ and parameters $\varepsilon$, $\nu$ and $\alpha$
in the model problems might be different in different places.

\section{Finite element for fourth order elliptic singular perturbation problems}
\label{s: 0-form}

\subsection{Notations of a quadrilateral and an auxiliary affine transformation}

Let $K$ be an arbitrary convex quadrilateral.
The four vertices of $K$ are given by $V_1$, $V_2$, $V_3$, $V_4$ in a counterclockwise order,
and the $i$th edge of $K$ is denoted by $E_i=V_iV_{i+1}$,
whose equation is written as $l_i(x,y)=0$, $i=1,2,3,4$.
Here and throughout the paper, the index $i$ is taken modulo four.
For each $E_{i}$, $M_{i}$ denotes its midpoint,
and $\bm{n}_i$ and $\bm{t}_i$ will be its unit normal and tangential directions.
The equations of lines through $M_{1}M_{3}$,
$M_{2}M_{4}$, $V_{1}V_{3}$ and $V_{2}V_{4}$ read as $m_{13}(x,y)=0$,
$m_{24}(x,y)=0$, $l_{13}(x,y)=0$ and $l_{24}(x,y)=0$, respectively.
Moreover, we assume that all the aforementioned line equations are uniquely determined by
\begin{equation}
\label{e: line unique}
l_1(M_3)=l_2(M_4)=l_3(M_1)=l_4(M_2)=m_{13}(M_2)=m_{24}(M_3)=l_{13}(V_4)=l_{24}(V_3)=1.
\end{equation}

In order to describe the construction,
we recall an auxiliary affine transformation for each $K$
generated by decomposing the standard bilinear mapping (see also \cite{Park2013, Dubach2009, Zhou2016}).
The reference square $\widehat{K}=[-1,1]^2$ is determined by its vertices
$\widehat{V}_1=(-1,-1)^T,\widehat{V}_2=(1,-1)^T,\widehat{V}_3=(1,1)^T$
and $\widehat{V}_4=(-1,1)^T$.
The bilinear mapping $F_K:\,\widehat{K}\rightarrow K$ such that $\widehat{V}_i$ is mapped into
$V_i$ for each $i$ can be decomposed as $F_K=A_K\circ S_K$ with $A_K:\,\widetilde{K}\rightarrow K$ and $S_K:\,\widehat{K}\rightarrow \widetilde{K}$ defined by
\[
\label{equation: affine and simple bilinear transformation}
A_K(\widetilde{\bm{x}})=\bm{A}\widetilde{\bm{x}}+\bm{b},
~S_K(\widehat{\bm{x}})=\widehat{\bm{x}}+\widehat{x}\widehat{y}\bm{s},
~\widetilde{\bm{x}}=(\widetilde{x},\widetilde{y})^T\in\widetilde{K},
~\widehat{\bm{x}}=(\widehat{x},\widehat{y})^T\in\widehat{K},
\]
where $\bm{A}$ is a $2\times 2$ matrix, and $\bm{b},\bm{d}$ and $\bm{s}$ are
two-dimensional vectors given by
\begin{equation}
\label{e: refer para}
\begin{aligned}
\bm{A}&=\frac{1}{4}(V_{3}-V_{4}-V_{1}+V_{2},V_{3}+V_{4}-V_{1}-V_{2}),~\bm{d}=\frac{1}{4}(V_{3}-V_{4}+V_{1}-V_{2}),\\
\bm{b}&=\frac{1}{4}(V_{3}+V_{4}+V_{1}+V_{2}),~\bm{s}=(s_1,s_2)^T=\bm{A}^{-1}\bm{d}.
\end{aligned}
\end{equation}
Figure \ref{fig: trans} gives an example of the intermediate reference element $\widetilde{K}$
and the auxiliary affine transformation $A_K$.
We shall denote a point on $\widetilde{K}$ by $\widetilde{V}=(\widetilde{x},\widetilde{y})^T$
if it equals $A_K^{-1}(V)$ for a point $V=(x,y)^T$ on $K$,
and an edge in $\widetilde{K}$ by $\widetilde{E}$ if it equals $A_K^{-1}(E)$ for an edge $E$ in $K$.
Note that $\widetilde{M}_i$ is also the midpoint of $\widehat{V}_i\widehat{V}_{i+1}$
and $\widetilde{V}_i=\widehat{V}_i+(-1)^{(i+1)}\bm{s}$.
Furthermore, since $K$ is convex, one must have
\begin{equation}
\label{equation: s1+s2<1} |s_1|+|s_2|<1.
\end{equation}
Similarly, we write the function $\widetilde{f}=f\circ A_K$ defined over $\widetilde{K}$
for a function $f$ over $K$.
A simple calculation will derive
\begin{equation}
\label{e: line equations}
\begin{aligned}
\widetilde{l}_1&=\frac{1}{2}\left(-\frac{s_2}{s_1-1}\xr+\yr+1\right),
~\widetilde{l}_2=\frac{1}{2}\left(-\xr+\frac{s_1}{s_2+1}\yr+1\right),
~\widetilde{l}_3=\frac{1}{2}\left(\frac{s_2}{s_1+1}\xr-\yr+1\right),\\
\widetilde{l}_4&=\frac{1}{2}\left(\xr-\frac{s_1}{s_2-1}\yr+1\right),
~\widetilde{l}_{13}=\frac{-\xr+\yr+s_1-s_2}{2(s_1-s_2+1)},
~\widetilde{l}_{24}=\frac{\xr+\yr+s_1+s_2}{2(s_1+s_2+1)},
~\widetilde{m}_{13}=\xr,
~\widetilde{m}_{24}=\yr.
\end{aligned}
\end{equation}

\begin{figure}[!htb]
\centering
\begin{overpic}[scale=0.45]{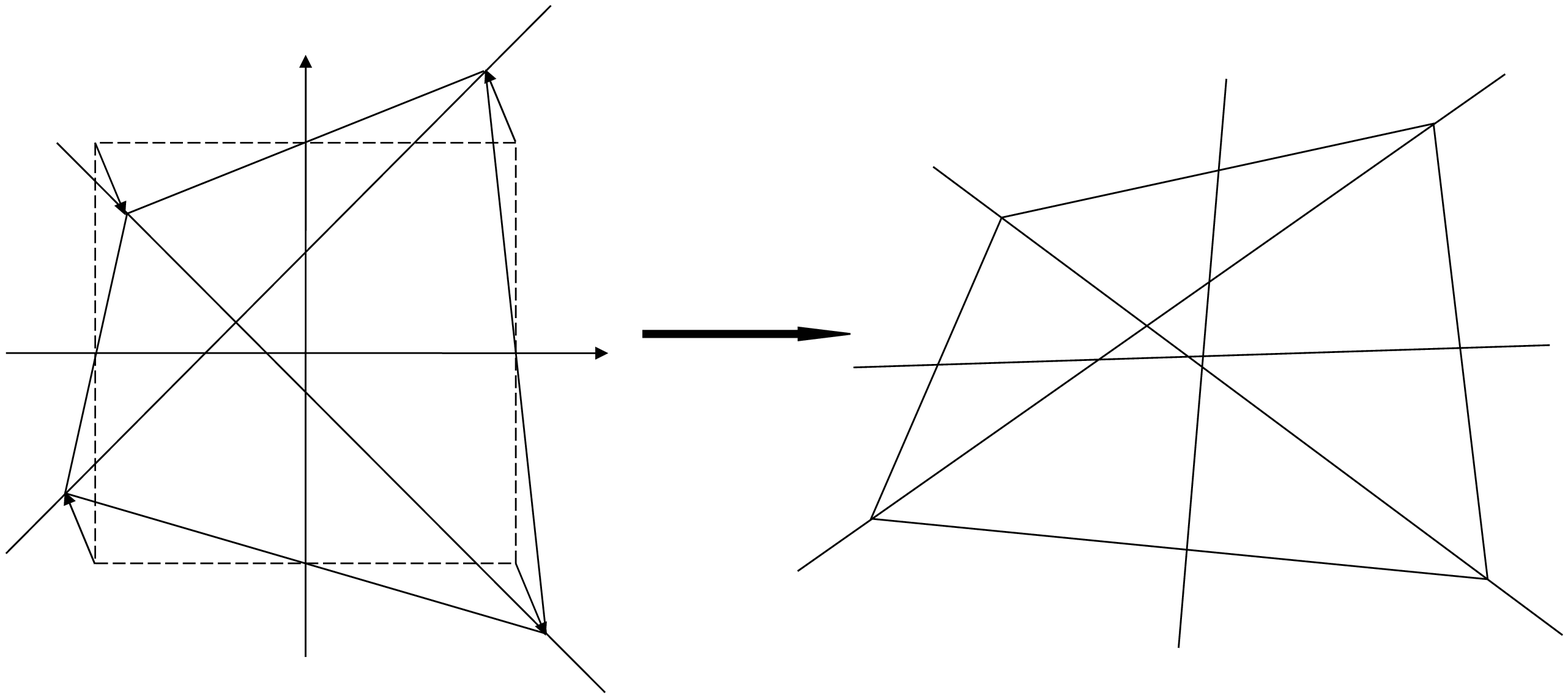}
\put(21,23){$(0,0)$} \put(40,20){$\widetilde{x}$}
\put(21.5,40){$\widetilde{y}$} \put(6,5.5){$\widehat{V}_1$}
\put(30.5,10){$\widehat{V}_2$} \put(34.5,33){$\widehat{V}_3$}
\put(6,36){$\widehat{V}_4$} \put(2.5,14){$\widetilde{V}_1$}
\put(33.5,1.5){$\widetilde{V}_2$} \put(29.5,40.5){$\widetilde{V}_3$}
\put(9,32){$\widetilde{V}_4$} \put(33.5,37){$\bm{s}$}
\put(4.5,9){$\bm{s}$} \put(16,1){$\widetilde{m}_{13}=0$}
\put(-3,23){$\widetilde{m}_{24}=0$}
\put(37,42){$\widetilde{l}_{13}=0$}
\put(40,1){$\widetilde{l}_{24}=0$}

\put(15,15){$\widetilde{K}$} \put(46,24.5){$A_K$}
\put(70,15){$K$}

\put(55.5,8.5){$V_{1}$} \put(93,5){$V_{2}$} \put(89.5,37.5){$V_{3}$}
\put(63.5,31.5){$V_{4}$} \put(77,4){$m_{13}=0$}
\put(96,19.5){$m_{24}=0$} \put(94,41){$l_{13}=0$}
\put(96,2){$l_{24}=0$}
\end{overpic}
\caption{Affine mapping $A_K$ from the intermediate
reference quadrilateral $\widetilde{K}$ to a general $K$.\label{fig: trans}}
\end{figure}

\subsection{An auxiliary 12-DoF finite element}

To design the element for fourth order singular perturbation problem,
we first introduce an auxiliary element,
which extends the rectangular Adini element to general convex quadrilaterals in a pseudo-$C^0$ manner.

\begin{definition}
\label{d: anxiliary scalar element}
The quadrilateral finite element $(K,W_K^-,T_K^-)$ is defined as follows:
\begin{itemize}
\setlength{\itemsep}{-\itemsep}
\item $K$ is a convex quadrilateral;
\item $W_K^-=P_3(K)\oplus\mathrm{span}\{\phi_1,\phi_2\}$ is the shape function space where
\[
\begin{aligned}
\phi_1&=(s_2-1)(s_2+1)l_1l_3m_{13}m_{24}-s_1s_2l_1l_3m_{24}^2+s_1l_1l_3m_{24}^2m_{13},\\
\phi_2&=(s_1-1)(s_1+1)l_2l_4m_{13}m_{24}-s_1s_2l_2l_4m_{13}^2+s_2l_2l_4m_{13}^2m_{24}.
\end{aligned}
\]
The parameters $s_1$ and $s_2$ are defined via (\ref{e: refer para}).
\item $T_K^-=\{\tau_j,~j=1,2,\ldots,12\}$ is the DoF set where
\[
\tau_j(w)=w(V_j),~(\tau_{j+4}(w),\tau_{j+8}(w))^T=\nabla w(V_j),~j=1,\ldots,4.
\]
\end{itemize}
\end{definition}

Write $p_1=l_1l_3l_4$, $p_2=l_1l_3l_2$, $p_3=l_1l_3l_{13}$, $p_4=\phi_1$
and $q_1=l_2l_4l_1$, $q_2=l_2l_4l_3$, $q_3=l_2l_4l_{24}$, $q_4=\phi_2$.
We also define the nodal functionals
\[
\begin{aligned}
\lambda_1(w)&=|E_2|\frac{\partial w}{\partial{\bm{t}_2}}(V_2),~\lambda_2(w)=|E_2|\frac{\partial{w}}{\partial{\bm{t}_2}}(V_3),~
\lambda_3(w)=|E_4|\frac{\partial{w}}{\partial{\bm{t}_4}}(V_4),~\lambda_4(w)=|E_4|\frac{\partial{w}}{\partial{\bm{t}_4}}(V_1),\\
\mu_1(w)&=|E_1|\frac{\partial w}{\partial{\bm{t}_1}}(V_1),~\mu_2(w)=|E_1|\frac{\partial{w}}{\partial{\bm{t}_1}}(V_2),~
\mu_3(w)=|E_3|\frac{\partial{w}}{\partial{\bm{t}_3}}(V_3),~\mu_4(w)=|E_3|\frac{\partial{w}}{\partial{\bm{t}_3}}(V_4)
\end{aligned}
\]
and the $4\times4$ matrices $\bm{M}$ and $\bm{N}$ by setting $\bm{M}_{i,j}=\lambda_i(p_j)$, $\bm{N}_{i,j}=\mu_i(q_j)$, $i,j=1,2,3,4$.
The following lemma is helpful to verify the unisolvency of $(K,W_K^-,T_K^-)$.

\begin{lemma}
\label{lemma: MN}
The matrices $\bm{M}$ and $\bm{N}$ are nonsingular.
\end{lemma}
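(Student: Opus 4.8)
The plan is to exploit the common factorisation hidden in the generating polynomials so that each $4\times 4$ determinant collapses to a pure vertex-evaluation determinant, which can then be computed explicitly on the reference element. First I would observe that each of $p_1,p_2,p_3$ and $p_4=\phi_1$ carries the common factor $l_1l_3$; writing $p_j=l_1l_3r_j$ one reads off $r_1=l_4$, $r_2=l_2$, $r_3=l_{13}$, and, from the definition of $\phi_1$,
\[
r_4=(s_2^2-1)m_{13}m_{24}-s_1s_2m_{24}^2+s_1m_{24}^2m_{13}.
\]
Since $l_1$ vanishes on $E_1=V_1V_2$ and $l_3$ on $E_3=V_3V_4$, at each of the four evaluation points $V_2,V_3,V_4,V_1$ entering $\lambda_1,\dots,\lambda_4$ exactly one of these two factors vanishes. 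Applying the product rule to the tangential derivative of $p_j$ at such a point kills every term still containing the surviving linear factor, so only the term in which the derivative hits the vanishing factor remains. Hence
\[
\lambda_i(p_j)=c_i\,r_j(V_{\sigma(i)}),\qquad (\sigma(1),\sigma(2),\sigma(3),\sigma(4))=(2,3,4,1),
\]
with a constant $c_i$ depending on $i$ alone, namely an edge length times a tangential derivative of $l_1$ or $l_3$ times the value of the complementary linear form at $V_{\sigma(i)}$.

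This gives the factorisation $\bm{M}=\mathrm{diag}(c_1,c_2,c_3,c_4)\,R$ with $R_{ij}=r_j(V_{\sigma(i)})$, so it suffices to show every $c_i\neq0$ and $\det R\neq0$. Each $c_i$ is nonzero: the relevant tangential derivative cannot vanish because two edges meeting at a common vertex of a convex quadrilateral are never parallel, and the complementary linear form does not vanish at a vertex off its defining edge. Thus the whole statement reduces to the nonsingularity of the vertex-evaluation matrix $R$.

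To evaluate $\det R$ I would pull everything back to the reference element via $A_K$, using the explicit expressions (\ref{e: line equations}) together with $\widetilde{V}_i=\widehat{V}_i+(-1)^{i+1}\bm{s}$; since point evaluations are preserved by the affine map, $\det R$ equals the corresponding determinant in the variables $s_1,s_2$. Many entries vanish by incidence — $l_{13}$ at $V_1,V_3$, $l_4$ at $V_1,V_4$, $l_2$ at $V_2,V_3$ — so the matrix is sparse, and a cofactor expansion (say along the $l_2$-column, which meets only two rows) reduces the determinant to a short combination of $3\times 3$ minors and ultimately to a rational function of $s_1,s_2$ whose denominator is manifestly nonzero on the admissible region. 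The main obstacle is the final step: verifying that the resulting numerator, a low-degree polynomial in $s_1,s_2$, has no zero on the open triangle $|s_1|+|s_2|<1$ supplied by the convexity relation (\ref{equation: s1+s2<1}); here one must control its sign over that region rather than rely on any generic argument.

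Finally, $\bm{N}$ is handled identically. The polynomials $q_1,q_2,q_3,\phi_2$ all share the factor $l_2l_4$, and at the vertices entering $\mu_1,\dots,\mu_4$ exactly one of $l_2,l_4$ vanishes, so the same product-rule reduction yields $\bm{N}=\mathrm{diag}(\widetilde{c}_1,\dots,\widetilde{c}_4)\,\widetilde{R}$ with a vertex-evaluation matrix $\widetilde{R}$. Because the index pairs $\{1,3\}$ and $\{2,4\}$, together with the interchanges $m_{13}\leftrightarrow m_{24}$, $l_{13}\leftrightarrow l_{24}$ and $s_1\leftrightarrow s_2$, enter (\ref{e: line equations}) symmetrically, the computation and the nonvanishing argument for $\det\widetilde{R}$ transfer verbatim from those for $\det R$, completing the proof.
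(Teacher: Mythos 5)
Your structural reduction is sound, and it in fact mirrors what the paper's computation reveals: writing $p_j=l_1l_3r_j$, noting that at each evaluation point of $\lambda_1,\dots,\lambda_4$ exactly one of $l_1,l_3$ vanishes, and collapsing the tangential derivative by the product rule to get $\bm{M}=\mathrm{diag}(c_1,\dots,c_4)\,R$ with $R_{ij}=r_j(V_{\sigma(i)})$ is correct, as is your argument that each $c_i\neq0$ (adjacent edges of a convex quadrilateral are never parallel, and a vertex never lies on the line of a nonadjacent edge); the paper's explicit matrices $\bm{M}_1,\bm{M}_2$ show exactly this sparsity pattern and row scaling by $f_3,f_4$. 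The genuine gap is that you stop precisely where the lemma actually lives. Nonsingularity here is not a structural fact: it hinges on the particular $(s_1,s_2)$-dependent coefficients built into $\phi_1$, and one must verify that the resulting polynomial has no zero on the convexity region $|s_1|+|s_2|<1$. You explicitly label this verification ``the main obstacle'' and leave it undone, so the proposal never establishes $\det R\neq0$ and hence never proves the statement. The paper closes exactly this step by a symbolic computation, obtaining $\det\bm{M}=4f_3^2f_4^2(s_1-s_2-1)(s_1^2+s_2^2-1)$ and $\det\bm{N}=4f_1^2f_2^2(s_1+s_2-1)(s_1^2+s_2^2-1)$, and then checking signs: under (\ref{equation: s1+s2<1}) one has $s_1\pm s_2-1<0$, $s_1^2+s_2^2\leq(|s_1|+|s_2|)^2<1$, and $f_i\neq0$, so neither determinant can vanish. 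Without this (or an equivalent) computation the argument is incomplete.

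A secondary inaccuracy concerns your symmetry claim for $\bm{N}$. The interchange $s_1\leftrightarrow s_2$ corresponds to the diagonal reflection $(\xr,\yr)\mapsto(\yr,\xr)$, which maps each of the lines $V_1V_3$ and $V_2V_4$ to itself; it sends $l_{13}$ to a multiple of $l_{13}$, not to $l_{24}$, and therefore carries the $p$-system to itself (up to permutation and scaling) rather than to the $q$-system. The symmetry that actually transfers the computation is a quarter-turn of the reference square combined with $(s_1,s_2)\mapsto(s_2,-s_1)$, under which $p_j$ maps to a multiple of $q_j$, $\phi_1$ to $-\phi_2$, and the $\lambda_i$ to a permutation of the $\mu_i$; note also that the two determinants above differ in the factor $s_1\mp s_2-1$, so the transfer is not literally ``verbatim.'' This part is repairable, but as written it is incorrect, and in any case it does not remove the need to carry out the determinant computation at least once.
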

\begin{proof}
For $i,j=1,2,3,4$, note that functionals
$|E_i|\frac{\partial w}{\partial{\bm{t}_i}}(V_j)=|\widetilde{E}_i|\frac{\partial \widetilde{w}}{\partial{\widetilde{\bm{t}}_i}}(\widetilde{V}_j)$,
therefore we can calculate the entries of $M$ and $N$ on $\widetilde{K}$
rather than the physical $K$ in variables $s_1$ and $s_2$.
Using (\ref{e: line equations}) we set
\begin{equation}
\label{e: f}
\begin{aligned}
f_1(s_1,s_2) &= \frac{(s_1+s_2-1)(s_1-s_2-1)}{(s_2-1)(s_2+1)},~
f_2(s_1,s_2) = \frac{(s_1-s_2+1)(s_1+s_2+1)}{(s_2-1)(s_2+1)},\\
f_3(s_1,s_2) &= \frac{(s_1+s_2+1)(s_1-s_2-1)}{(s_1-1)(s_1+1)},~
f_4(s_1,s_2) = \frac{(s_1-s_2+1)(s_1+s_2-1)}{(s_1-1)(s_1+1)}
\end{aligned}
\end{equation}
and a direct computation gives $\bm{M}=\left(\bm{M}_1^T,\bm{M}_2^T\right)^T$ and
$\bm{N}=\left(\bm{N}_1^T,\bm{N}_2^T\right)^T$, where
\[
\begin{aligned}
\bm{M}_1&=f_3\left(
\begin{array}{cccc}
(s_1 + s_2 - 1)/(s_2 - 1) & 0 & (s_1 - s_2 - 1)/(s_1 - s_2 + 1) & -(s_2+1)^2(s_1^2 + s_2 - 1)\\
(s_1 - s_2 + 1)/(s_2 - 1) & 0 & 0 & -(s_2 + 1)^2(s_1^2 + s_2 - 1)\\
\end{array}
\right),\\
\bm{M}_2&=f_4\left(
\begin{array}{cccc}
0 & (s_1 + s_2 + 1)/(s_2 + 1)& 1 & -(s_2 - 1)^2(s_1^2 - s_2 - 1)\\
0 & (s_1 - s_2 - 1)/(s_2 + 1)& 0 & -(s_2 - 1)^2(s_1^2 - s_2 - 1)\\
\end{array}
\right),\\
\bm{N}_1&=f_1\left(
\begin{array}{cccc}
0 & -(s_1 - s_2 + 1)/(s_1 + 1) & -(s_1 + s_2 - 1)/(s_1 + s_2 + 1) & (s_1 - 1)^2(- s_2^2 + s_1 + 1)\\
0 & (s_1 + s_2 + 1)/(s_1 + 1) &  0 & (s_1 - 1)^2(- s_2^2 + s_1 + 1)\\
\end{array}
\right),\\
\bm{N}_2&=f_2\left(
\begin{array}{cccc}
-(s_1 - s_2 - 1)/(s_1 - 1) &  0 & -1 &  -(s_1 + 1)^2(s_2^2 + s_1 - 1)\\
s_1 + s_2 - 1)/(s_1 - 1) &  0 &  0 &  -(s_1 + 1)^2(s_2^2 + s_1 - 1)\\
\end{array}
\right).
\end{aligned}
\]
Hence, by a symbolic computation,
\[
\begin{aligned}
\det \bm{M}&=4f_3^2f_4^2(s_1 - s_2 - 1)(s_1^2 + s_2^2 - 1),\\
\det \bm{N}&=4f_1^2f_2^2(s_1 + s_2 - 1)(s_1^2 + s_2^2 - 1).
\end{aligned}
\]
It then follows from (\ref{equation: s1+s2<1}) that $\det \bm{M}\neq0$ and $\det \bm{N}\neq0$,
which is the desired result.
\end{proof}

\begin{lemma}
\label{lemma: tangential unisol}
The element $(K,W_K^-,T_K^-)$ is well-defined.
\end{lemma}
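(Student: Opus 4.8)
The plan is to establish unisolvence. Since $\dim W_K^- = \dim P_3(K) + 2 = 12$ equals the cardinality of $T_K^-$, it suffices to show that any $w \in W_K^-$ annihilated by every functional in $T_K^-$ vanishes. I would first trade the gradient functionals for the tangential ones $\{\lambda_i,\mu_i\}$: at each vertex the two incident edges are non-parallel because $K$ is convex, so the two edge-tangential derivatives there recover $\nabla w$. Hence $T_K^-$ is equivalent to the set consisting of the four vertex values together with $\lambda_1,\dots,\lambda_4$ and $\mu_1,\dots,\mu_4$, and everything reduces to proving that the $12\times 12$ matrix of these functionals against a well-chosen basis of $W_K^-$ is nonsingular.

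The basis I would use is $\{r_1,r_2,r_3,r_4\}\cup\{p_1,p_2,p_3,p_4\}\cup\{q_1,q_2,q_3,q_4\}$, where $r_1,\dots,r_4\in P_3(K)$ are any cubics completing $\{p_1,p_2,p_3,q_1,q_2,q_3\}$ to a basis of $P_3(K)$. Here $p_1,p_2,p_3$ span $l_1l_3 P_1(K)$ and $q_1,q_2,q_3$ span $l_2l_4 P_1(K)$; these spaces meet only in $\{0\}$ since a common element would be divisible by $l_1l_2l_3l_4$ and so have degree $\ge 4$. Thus the six functions are independent in $P_3(K)$, and adjoining $p_4=\phi_1$ and $q_4=\phi_2$ produces a genuine basis of $W_K^-$.

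The key observation is the block structure of the functional matrix when the DoFs are ordered as (vertex values, $\lambda_1,\dots,\lambda_4$, $\mu_1,\dots,\mu_4$) and the basis as $(r_j,p_j,q_j)$. Every vertex lies on $E_1\cup E_3$ and on $E_2\cup E_4$; since each $p_j$ carries the factor $l_1l_3$ and each $q_j$ the factor $l_2l_4$, all $p_j$ and $q_j$ vanish at every vertex, so their value rows are zero. Because $\lambda_i$ is an edge-tangential derivative along $E_2$ or $E_4$, on which each $q_j$ vanishes identically, one gets $\lambda_i(q_j)=0$, and symmetrically $\mu_i(p_j)=0$. Recalling $\lambda_i(p_j)=\bm M_{ij}$ and $\mu_i(q_j)=\bm N_{ij}$, the matrix becomes block lower triangular with diagonal blocks $A=(r_j(V_k))_{k,j}$, $\bm M$, and $\bm N$, so its determinant equals $\det A\,\det\bm M\,\det\bm N$.

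It then remains to verify that each factor is nonzero. Lemma~\ref{lemma: MN} already supplies $\det\bm M\neq 0$ and $\det\bm N\neq 0$, so the one remaining point is $\det A\neq 0$. For this I would show that the cubics vanishing at all four vertices are exactly $l_1l_3 P_1(K)\oplus l_2l_4 P_1(K)$: this space is contained in the kernel of the vertex-value map, both have dimension $6$ (the four vertex evaluations are independent functionals on $P_3(K)$, as witnessed by degree-two Lagrange-type functions such as $l_2 l_{24}$, which vanishes at three vertices but not the fourth), and therefore they coincide. Consequently the vertex-value map restricts to an isomorphism of $\mathrm{span}\{r_1,\dots,r_4\}$ onto $\mathbb{R}^4$, giving $\det A\neq 0$. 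I expect the main obstacle to be setting up the block-triangular reduction correctly — namely choosing the basis so that precisely $\bm M$ and $\bm N$ surface as diagonal blocks — since the computational heart of the proof is already packaged in Lemma~\ref{lemma: MN}.
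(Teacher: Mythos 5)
Your proposal is correct and takes essentially the same approach as the paper's proof: both trade the gradient DoFs for the tangential functionals $\lambda_i,\mu_i$, exploit the vanishing pattern $\tau_k(p_j)=\tau_k(q_j)=0$ and $\lambda_i(q_j)=\mu_i(p_j)=0$ to obtain a block-triangular structure whose nontrivial blocks are $\bm{M}$ and $\bm{N}$, and then invoke Lemma \ref{lemma: MN}. The only difference is the vertex-value block: the paper chooses the explicit quadratics $r_1=l_2l_3$, $r_2=l_3l_4$, $r_3=l_4l_1$, $r_4=l_1l_2$, for which $\tau_i(r_j)=r_j(V_j)\delta_{ij}$ renders that block diagonal and trivially nonsingular, whereas you complete the basis abstractly and justify $\det A\neq 0$ by identifying the kernel of the vertex-evaluation map on $P_3(K)$ as $l_1l_3P_1(K)\oplus l_2l_4P_1(K)$ --- a correct but slightly longer detour within the same argument.
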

\begin{proof}
Set $r_1=l_2l_3$, $r_2=l_3l_4$, $r_3=l_4l_1$, $r_4=l_1l_2$,
then our goal is to show $W_K^-=\mathrm{span}\{p_i,q_i,r_i,~i=1,2,3,4\}$
and that $\tau_j(w)=0$ for $w\in W_K^-$ will derive $w=0$.
First we see all $p_i$, $q_i$ and $r_i$ are linearly independent.
Indeed, if
\begin{equation}
\label{e: linear combi}
w=\sum_{i=1}^4(\alpha_ip_i+\beta_iq_i+\gamma_ir_i)=0,
\end{equation}
for $\alpha_i,\beta_i,\gamma_i\in\mathbb{R}$, then $\tau_i(w)=0$ for $i=1,2,3,4$.
Noting that
\[
\tau_i(p_j)=\tau_i(q_j)=0,~\tau_i(r_j)=r_i(V_i)\delta_{ij},~r_i(V_i)\neq0,~i,j=1,2,3,4,
\]
we obtain $\gamma_i=0$, $i=1,2,3,4$.
Moreover, the constructions of all $q_i$ imply
\[
\lambda_i(q_j)=0,~\mu_i(p_j)=0,~i,j=1,2,3,4,
\]
and therefore it follows from $\lambda_i(w)=0$ and $\mu_i(w)=0$ that
\[
\sum_{j=1}^4\alpha_j\lambda_i(p_j)=0,~\sum_{j=1}^4\beta_j\mu_i(q_j)=0.
\]
Taking $i=1,2,3,4$, we find
\[
\bm{M}(\alpha_1,\alpha_2,\alpha_3,\alpha_4)^T=\bm{N}(\beta_1,\beta_2,\beta_3,\beta_4)^T=\bm{0},
\]
which implies $\alpha_1=\alpha_2=\alpha_3=\alpha_4=\beta_1=\beta_2=\beta_3=\beta_4=0$ according to Lemma \ref{lemma: MN}.
Hence, all $p_i$, $q_i$ and $r_i$ are linearly independent,
namely, $\dim\mathrm{span}\{p_i,q_i,r_i,~i=1,2,3,4\}=\dim W_K^-=12$.
Moreover, all $p_i$, $q_i$ and $r_i$ are members of $W_K^-$,
and thus these two sets are equal.
Repeat the same process above from (\ref{e: linear combi}) apart from the assumption $w=0$,
the unisolvency is derived since $\tau_j(w)=0$ for $j=5,\ldots,12$ if and only if $\lambda_i(w)=\mu_i(w)=0$, $i=1,2,3,4$.
\end{proof}

The following lemma hints a critical property of this element,
which partly explains why the coefficients in $\phi_1$ and $\phi_2$ are necessary.

\begin{lemma}
\label{lemma: scalar tangent linear relation}
For all $w\in W_K^-$, it holds that
\begin{equation}
\label{e: scalar tangent linear relation}
\frac{1}{|E_i|}\int_{E_i}w\,\mathrm{d}s=\frac{1}{2}(w(V_i)+w(V_{i+1}))
-\frac{|E_i|}{12}\left(\frac{\partial w}{\partial \bm{t}_i}(V_{i+1})-\frac{\partial w}{\partial \bm{t}_i}(V_{i})\right),
~i=1,2,3,4.
\end{equation}
\end{lemma}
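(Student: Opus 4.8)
The plan is to read the right-hand side of (\ref{e: scalar tangent linear relation}) as a one-dimensional quadrature rule applied to the trace of $w$ along $E_i$, and to exploit that this rule is exact precisely on cubics. For a univariate function $g$ on $[0,L]$, introduce the defect
\[
\mathcal{R}[g]=\frac{1}{L}\int_0^L g\,\mathrm{d}s-\frac12\bigl(g(0)+g(L)\bigr)+\frac{L}{12}\bigl(g'(L)-g'(0)\bigr),
\]
so that (\ref{e: scalar tangent linear relation}) on $E_i$ is exactly the assertion $\mathcal{R}[w|_{E_i}]=0$, with $s$ the arclength and $g'$ the derivative along $\bm{t}_i$. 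Checking $\mathcal{R}$ on the monomial basis $1,s,s^2,s^3$ shows $\mathcal{R}[g]=0$ for every cubic $g$; hence, since the trace on a straight edge of any $p\in P_3(K)$ is a univariate cubic, (\ref{e: scalar tangent linear relation}) holds for all $p\in P_3(K)$. Because $W_K^-=P_3(K)\oplus\mathrm{span}\{\phi_1,\phi_2\}$ and $\mathcal{R}$ is linear, it remains to verify the identity only for $w=\phi_1$ and $w=\phi_2$.

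First I would dispose of the trivial edges. Since $\phi_1$ carries the factor $l_1l_3$, it vanishes identically along $E_1$ and $E_3$; a function that is identically zero along an edge has vanishing trace and vanishing tangential derivative there, so both sides of (\ref{e: scalar tangent linear relation}) vanish and $\mathcal{R}[\phi_1|_{E_1}]=\mathcal{R}[\phi_1|_{E_3}]=0$. The same reasoning applied to the factor $l_2l_4$ of $\phi_2$ settles $E_2$ and $E_4$ for $\phi_2$. Thus the only genuine cases are $\phi_1$ on $E_2,E_4$ and $\phi_2$ on $E_1,E_3$, where the traces are quartic or quintic and the defect need not vanish automatically.

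For these I would pass to the reference element. A short computation shows that $\mathcal{R}$ is invariant under the affine reparametrization induced by $A_K$: arclength rescales linearly, so the mean value and the endpoint values are unchanged, while $L$ and $\partial_{\bm{t}_i}$ transform reciprocally, leaving $L(g'(L)-g'(0))$ fixed; $\mathcal{R}$ is moreover insensitive to orientation. Hence $\mathcal{R}[\phi_1|_{E_i}]=\mathcal{R}[\widetilde{\phi}_1|_{\widetilde{E}_i}]$, and I may compute on $\widetilde{K}$ using (\ref{e: line equations}) together with $\widetilde{m}_{13}=\widetilde{x}$, $\widetilde{m}_{24}=\widetilde{y}$ and $\widetilde{V}_i=\widehat{V}_i+(-1)^{i+1}\bm{s}$. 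Parametrizing $\widetilde{E}_2$ linearly by $t\in[0,1]$ from $\widetilde{V}_2$ to $\widetilde{V}_3$, the trace $\widetilde{\phi}_1|_{\widetilde{E}_2}$ becomes an explicit polynomial in $t$ of degree at most five whose coefficients are rational in $s_1,s_2$; substituting into the $[0,1]$-normalized defect and simplifying gives $\mathcal{R}[\widetilde{\phi}_1|_{\widetilde{E}_2}]=0$. The remaining three cases then follow without new computation from two discrete symmetries of the construction: the reflection $(\widetilde{x},\widetilde{y},s_1,s_2)\mapsto(\widetilde{y},\widetilde{x},s_2,s_1)$ interchanges $\widetilde{\phi}_1\leftrightarrow\widetilde{\phi}_2$ and $\widetilde{E}_2\leftrightarrow\widetilde{E}_3$, $\widetilde{E}_4\leftrightarrow\widetilde{E}_1$, while the central map $(\widetilde{x},\widetilde{y},s_1,s_2)\mapsto(-\widetilde{x},-\widetilde{y},-s_1,-s_2)$ fixes $\widetilde{\phi}_1$ and swaps $\widetilde{E}_2\leftrightarrow\widetilde{E}_4$.

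I expect the single computation $\mathcal{R}[\widetilde{\phi}_1|_{\widetilde{E}_2}]=0$ to be the only real obstacle: the trace is genuinely quintic, so the corrected trapezoidal rule is not exact by itself, and the vanishing of the defect hinges on exact cancellation among the quartic and quintic contributions. This is where the specific coefficients $(s_2^2-1)$, $-s_1s_2$ and $s_1$ in $\phi_1$ are forced — they are precisely the weights that annihilate the quadrature error of the top-degree terms — which is the sense in which the lemma explains why the coefficients in $\phi_1$ and $\phi_2$ are necessary. The algebra is mechanical and, as in Lemma \ref{lemma: MN}, readily confirmed by a symbolic computation, so I would present it as a direct verification rather than grind through it by hand.
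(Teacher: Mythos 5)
Your proposal is correct and follows essentially the same route as the paper: decompose $W_K^-$ into $P_3(K)$ and $\mathrm{span}\{\phi_1,\phi_2\}$, note that the right-hand side of (\ref{e: scalar tangent linear relation}) is the corrected trapezoidal rule exact on cubics (the paper derives this via integration by parts plus Simpson's rule, you verify it on the monomials $1,s,s^2,s^3$ --- the same fact), and settle $\phi_1,\phi_2$ by direct computation on the reference element $\widetilde{K}$. Your reductions --- the vanishing of $\phi_1$ on $E_1,E_3$ and of $\phi_2$ on $E_2,E_4$, the affine invariance of the defect functional, and the two symmetries that cut the symbolic work down to a single edge --- are a tidy reorganization of the paper's blanket ``direct calculation on $\widetilde{K}$,'' not a genuinely different argument.
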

\begin{proof}
For each $i$, let $\xi_i\in P_1(E_i)$ be taken such that $\xi_i(V_i)=-1$ and $\xi_i(V_{i+1})=1$.
Then by integrating by parts, one must have
\begin{equation}
\label{e: integral by parts}
\frac{1}{|E_i|}\int_{E_i}w\,\mathrm{d}s=\frac{1}{2}\left(w(V_i)+w(V_{i+1})
-\int_{E_i}\frac{\partial w}{\partial\bm{t}_i}\xi_i\,\mathrm{d}s\right)=0,~i=1,2,3,4,~\forall w\in C^0(K).
\end{equation}
If $w\in P_3(K)$, the Simpson quadrature rule implies
\[
\frac{1}{|E_i|}\int_{E_i}\frac{\partial w}{\partial\bm{t}_i}\xi_i\,\mathrm{d}s
=\frac{1}{6}\left(\frac{\partial w}{\partial \bm{t}_i}(V_{i+1})-\frac{\partial w}{\partial \bm{t}_i}(V_{i})\right),
\]
which along with (\ref{e: integral by parts}) leads to (\ref{e: scalar tangent linear relation}).
For $\phi_1$ and $\phi_2$, we can directly calculate on $\widetilde{K}$ that
\[
\frac{1}{|\widetilde{E}_i|}\int_{\widetilde{E}_i}\widetilde{\phi}_j\,\mathrm{d}\widetilde{s}
=\frac{1}{2}(\widetilde{\phi}_j(\widetilde{V}_i)+\widetilde{\phi}_j(\widetilde{V}_{i+1}))
-\frac{|\widetilde{E}_i|}{12}\left(\frac{\partial \widetilde{\phi}_j}{\partial \widetilde{\bm{t}}_i}(\widetilde{V}_{i+1})-\frac{\partial \widetilde{\phi}_j}{\partial \widetilde{\bm{t}}_i}(\widetilde{V}_{i})\right)=0,~i=1,2,3,4,~j=1,2,
\]
and so the assertion is verified.
\end{proof}

\begin{remark}
\label{rmk: s2}
If $K$ is a rectangle, $s_1=s_2=0$.
This element degenerates to the traditional Adini element.
\end{remark}

\subsection{Enriched by bubble functions and normal aggregation}

Let us now introduce the following $H_0^1(K)$-bubble function space $W_K^b$ over $K$:
\begin{equation}
\label{e: bubble space}
W_K^b=\mbox{span}\{b_0,b_0x,b_0y,b_0l_{13}l_{24}\},~b_0=l_1l_2l_3l_4.
\end{equation}
The DoFs in $T_K^b$ with respect to these bubble functions are
\[
\tau^b_i(w)=\frac{1}{|E_{i}|}\int_{E_{i}}\frac{\partial w}{\partial \bm{n}_{i}}\,\mathrm{d}s,~i=1,2,3,4.
\]

\begin{lemma}
\label{lemma: bubble unisol}
If $w\in W_K^b$ such that all $\tau_i^b(w)=0$, $i=1,2,3,4$, then $w=0$.
\end{lemma}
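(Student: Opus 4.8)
The plan is to verify unisolvency by reducing the four functionals $\tau_i^b$ to a $4\times 4$ matrix over the intermediate reference element $\widetilde K$ and showing this matrix is nonsingular, in the same spirit as Lemma~\ref{lemma: MN}. Since $\dim W_K^b=4$ equals the number of functionals in $T_K^b$, it suffices to prove that the linear map $w\mapsto(\tau_1^b(w),\dots,\tau_4^b(w))$ is injective on $W_K^b$, i.e.\ that the associated DoF matrix (in any basis) is nonsingular.

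First I would exploit the bubble structure. Write a generic element of $W_K^b$ as $w=b_0 g$ with $g\in\mathrm{span}\{1,x,y,l_{13}l_{24}\}$, and factor $b_0=l_i h_i$ with $h_i=\prod_{k\neq i}l_k$. Since $l_i$ vanishes on $E_i$, the product rule gives $\nabla w|_{E_i}=(\nabla l_i)\,h_i g$ on $E_i$, whence
\[
\frac{\partial w}{\partial\bm n_i}\Big|_{E_i}=(\nabla l_i\cdot\bm n_i)\,h_i g .
\]
Here $\nabla l_i$ is a constant vector normal to $E_i$ and, by the normalization (\ref{e: line unique}), points into the interior while $\bm n_i$ points outward, so $\nabla l_i\cdot\bm n_i$ is a \emph{nonzero} constant. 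Consequently
\[
\tau_i^b(w)=(\nabla l_i\cdot\bm n_i)\,\frac{1}{|E_i|}\int_{E_i}h_i g\,\mathrm{d}s .
\]
Because the scalar $\nabla l_i\cdot\bm n_i$ merely rescales the $i$th row, the injectivity question is equivalent to the nonsingularity of the matrix $\bm B$ with $\bm B_{ij}=\frac{1}{|E_i|}\int_{E_i}h_i g_j\,\mathrm{d}s$ for $g_j\in\{1,x,y,l_{13}l_{24}\}$.

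Next I would transport $\bm B$ to $\widetilde K$. The normalized edge mean $\frac{1}{|E_i|}\int_{E_i}(\cdot)\,\mathrm{d}s$ is invariant under the affine map $A_K$, since the constant factor $|\bm A\widetilde{\bm t}_i|$ in $\mathrm{d}s$ cancels against $|E_i|=|\bm A\widetilde{\bm t}_i|\,|\widetilde E_i|$. Moreover $b_0,x,y,l_{13}l_{24}$ pull back to $\widetilde b_0,\widetilde x,\widetilde y,\widetilde l_{13}\widetilde l_{24}$ up to an invertible affine recombination (recall $\widetilde m_{13}=\widetilde x$, $\widetilde m_{24}=\widetilde y$), which does not affect singularity. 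Hence it remains to evaluate
\[
\widetilde{\bm B}_{ij}=\frac{1}{|\widetilde E_i|}\int_{\widetilde E_i}\Big(\prod_{k\neq i}\widetilde l_k\Big)\widetilde g_j\,\mathrm{d}\widetilde s,\qquad \widetilde g_j\in\{1,\widetilde x,\widetilde y,\widetilde l_{13}\widetilde l_{24}\},
\]
using the explicit line equations (\ref{e: line equations}) and the fact that $\widetilde E_i$ joins $\widetilde V_i=\widehat V_i+(-1)^{i+1}\bm s$ to $\widetilde V_{i+1}$.

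The last and most laborious step is the symbolic computation: each $\widetilde{\bm B}_{ij}$ is a one-dimensional polynomial integral along a segment of $\widetilde K$, hence an explicit rational function of $s_1,s_2$, and I would compute $\det\widetilde{\bm B}$ and check that it cannot vanish on the convexity region $|s_1|+|s_2|<1$ from (\ref{equation: s1+s2<1}). As in Lemma~\ref{lemma: MN}, I anticipate $\det\widetilde{\bm B}$ to factor into the $f$-type denominators of (\ref{e: f}) times a polynomial of the form $(s_1^2+s_2^2-1)$, which is strictly negative whenever $|s_1|+|s_2|<1$; producing this factorization cleanly is the main obstacle, and it is precisely where the special structure of the enrichment $b_0 l_{13}l_{24}$ is used. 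Once $\det\widetilde{\bm B}\neq0$ is established, $\bm B$ is nonsingular, the DoF map is injective, and $w=0$ follows.
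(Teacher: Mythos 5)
Your proposal takes essentially the same route as the paper's proof: factoring $b_0=l_i\,(b_0b_j/l_i)$ so that the product rule on $E_i$ gives $\tau_i^b(b_0b_j)=\frac{\partial l_i}{\partial\bm{n}_i}\cdot\frac{1}{|E_i|}\int_{E_i}(b_0b_j/l_i)\,\mathrm{d}s$ with a nonzero constant prefactor, transporting the normalized edge means to $\widetilde{K}$ via the affine map, and verifying that the resulting $4\times4$ determinant cannot vanish under the convexity constraint (\ref{equation: s1+s2<1}). The one step you defer, the symbolic determinant, is exactly what the paper computes; there the numerator factor is a degree-six polynomial in $s_1,s_2$ rather than your anticipated $(s_1^2+s_2^2-1)$, shown strictly negative by discarding its negative terms and using $s_1^2+s_2^2<1$, so your outline is methodologically identical and correct.
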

\begin{proof}
Over $K$ we define the following polynomials $b_1=1$, $b_2=m_{13}$, $b_3=m_{24}$, $b_4=l_{13}l_{24}$
and $b_{i,j}=b_0b_j/l_i$ for $i,j=1,2,3,4$, then we turn to the intermediate reference quadrilateral $\widetilde{K}$.
Consider the $4\times4$ matrix $\bm{B}^-$ determined by
\[
\bm{B}^-_{i,j}=\frac{1}{|E_i|}\int_{E_i}b_{i,j}\,\mathrm{d}s
=\frac{1}{|\widetilde{E}_i|}\int_{\widetilde{E}_i}\widetilde{b}_{i,j}\,\mathrm{d}\widetilde{s},
~i,j=1,2,3,4.
\]
By (\ref{e: line equations}) and a direct computation,
we find $\bm{B}^-=\left((\bm{B}^-_1)^T,(\bm{B}^-_2)^T,
(\bm{B}^-_3)^T,(\bm{B}^-_4)^T\right)^T$,
where
\[
\begin{aligned}
\bm{B}^-_1&=\frac{1}{6}f_1\left( -1, \frac{s_2(s_1 - 1)}{5(s_1 + 1)}, \frac{s_2^2 + 5s_1 + 5}{5(s_1 + 1)},
-\frac{(s_1 + s_2 - 1)(s_1 - s_2 - 1)}{5(s_1 + s_2 + 1)(s_1 - s_2 + 1)}\right),\\
\bm{B}^-_2&=\frac{1}{6}f_3\left( 1, -\frac{s_1^2 - 5s_2 + 5}{5(s_2 - 1)},
-\frac{s_1(s_2 + 1)}{5(s_2 - 1)}, \frac{s_1 - s_2 - 1}{5(s_1 - s_2 + 1)}\right),\\
\bm{B}^-_3&=\frac{1}{6}f_2\left(-1, \frac{s_2(s_1 + 1)}{5(s_1 - 1)}, 
-\frac{- s_2^2 + 5s_1 - 5}{5(s_1 - 1)}, -\frac{1}{5}\right),\\
\bm{B}^-_4&=\frac{1}{6}f_4\left(1, -\frac{s_1^2 + 5s_2 + 5}{5(s_2 + 1)}, -\frac{s_1(s_2 - 1)}{5(s_2 + 1)},
\frac{s_1 + s_2 - 1}{5(s_1 + s_2 + 1)}\right),\\
\end{aligned}
\]
and $f_i$, $i=1,2,3,4$ have been given in (\ref{e: f}).
A symbolic computation gives
\[
\det\bm{B}^-=
\frac{f_1f_2f_3f_4((s_1^6+s_2^6)-s_1^2s_2^2(s_1^2+s_2^2)+9(s_1^4+s_2^4)-26s_1^2s_2^2+15(s_1^2+s_2^2)-25)}
{20250(s_1 - 1)(s_1 + 1)(s_2 - 1)(s_2 + 1)(s_1 + s_2 + 1)(s_1 - s_2 + 1)}.
\]
Note from (\ref{equation: s1+s2<1}) that the factor of the numerator
\[
\begin{aligned}
&(s_1^6+s_2^6)-s_1^2s_2^2(s_1^2+s_2^2)+9(s_1^4+s_2^4)-26s_1^2s_2^2+15(s_1^2+s_2^2)-25\\
\leq&(s_1^6+s_2^6)+9(s_1^4+s_2^4)+15(s_1^2+s_2^2)-25<0
\end{aligned}
\]
and therefore $\bm{B}^-$ is nonsingular.

Next, we turn to the matrix $\bm{B}$ with $\bm{B}_{i,j}=\tau^b_i(b_0b_j)$.
Our aim is to show $\bm{B}$ is nonsingular as $W_K^b=\mathrm{span}\{b_0b_j,~j=1,2,3,4\}$.
For each $i$ and $j$, since
\[
\bm{B}_{i,j}=\frac{1}{|E_i|}\left(\int_{E_i}\frac{\partial l_i}{\partial \bm{n}_i}b_{i,j}\,\mathrm{d}s
+\int_{E_i}\frac{\partial (b_{i,j})}{\partial \bm{n}_i}l_i\,\mathrm{d}s\right)
=\frac{\partial l_i}{\partial\bm{n}_i}\bm{B}_{i,j}^-,
\]
then
\[
\mbox{det}\bm{B}=\left(\prod_{i=1}^{4}\frac{\partial l_i}{\partial \bm{n}_i}\right)\mbox{det}\bm{B}^-\neq0,
\]
which completes the proof.
\end{proof}

We are in a position to propose the element for fourth order elliptic singular perturbation problems
by the normal aggregation strategy.
The DoFs are at vertices over a quadrilateral, which is a nodal type construction.

\begin{definition}
\label{d: scalar element}
The quadrilateral finite element $(K,W_K,T_K)$ is defined by:
\begin{itemize}
\setlength{\itemsep}{-\itemsep}
\item $K$ is a convex quadrilateral;
\item $W_K$ is the shape function space:
\begin{equation}
\label{e: W_K}
\begin{aligned}
W_K=\left\{w\in W_K^-\oplus W_K^b:
~\frac{1}{|E_{i}|}\int_{E_{i}}\frac{\partial w}{\partial \bm{n}_{i}}\,\mathrm{d}s
=\frac{1}{2}\left(\frac{\partial w}{\partial \bm{n}_{i}}(V_i)+\frac{\partial w}{\partial \bm{n}_{i}}(V_{i+1})\right)
,~i=1,2,3,4\right\};
\end{aligned}
\end{equation}
\item $T_K=T_K^-$ is the DoF set.
\end{itemize}
Here $W_K^b$ and $(K,W_K^-,T_K^-)$ have been given in (\ref{e: bubble space}) and Definition \ref{d: anxiliary scalar element}, respectively.
\end{definition}

\begin{theorem}
\label{th: scalar unisol}
The element $(K,W_K,T_K)$ is well-defined.
Moreover, (\ref{e: scalar tangent linear relation}) holds for all $w\in W_K$ and $P_2(K)\subset W_K$.
\end{theorem}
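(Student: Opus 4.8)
The plan is to exploit the direct-sum structure of $W_K^-\oplus W_K^b$, the point being that every bubble function is invisible to the nodal functionals of $T_K^-$, while the four normal-aggregation constraints in (\ref{e: W_K}), once restricted to $W_K^b$, collapse onto the bubble functionals $\tau_i^b$ of Lemma~\ref{lemma: bubble unisol}.

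First I would record the pivotal fact that every $w^b\in W_K^b$ satisfies $\tau_j(w^b)=0$ for all $j=1,\dots,12$. Since $b_0=l_1l_2l_3l_4$ vanishes on every edge, each member of $W_K^b$ vanishes on $\partial K$, so $w^b(V_j)=0$; moreover $w^b$ vanishes identically along the two edges incident to $V_j$, whose tangent directions span $\mathbb{R}^2$ by convexity, forcing $\nabla w^b(V_j)=\bm{0}$. This also yields $W_K^-\cap W_K^b=\{\bm{0}\}$, because such a common element lies in $W_K^-$ with all twelve DoFs zero and hence vanishes by Lemma~\ref{lemma: tangential unisol}; consequently $\dim(W_K^-\oplus W_K^b)=12+4=16$.

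Next I would treat the four defining equalities in (\ref{e: W_K}) as a linear map $\Phi:W_K^-\oplus W_K^b\to\mathbb{R}^4$ whose kernel is $W_K$. Because $\nabla w^b(V_j)=\bm{0}$, the $i$th component of $\Phi$ restricted to $W_K^b$ is exactly $\tau_i^b(w^b)$, and Lemma~\ref{lemma: bubble unisol} shows this restriction is an isomorphism onto $\mathbb{R}^4$; hence $\Phi$ is surjective and $\dim W_K=16-4=12=|T_K|$. Unisolvency then follows by a splitting: if $w=w^-+w^b\in W_K$ has $\tau_j(w)=0$ for all $j$, the pivotal fact gives $\tau_j(w^-)=0$, so $w^-=0$ by Lemma~\ref{lemma: tangential unisol}; the defining constraint applied to $w=w^b$ now reads $\tau_i^b(w^b)=0$, its right-hand side vanishing since $\nabla w^b(V_j)=\bm{0}$, whence $w^b=0$ by Lemma~\ref{lemma: bubble unisol}.

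For the last two assertions I would split $w=w^-+w^b$ once more. Relation (\ref{e: scalar tangent linear relation}) holds on $W_K^-$ by Lemma~\ref{lemma: scalar tangent linear relation} and holds trivially on $W_K^b$, since the edge trace, the vertex values, and the edge-tangential derivatives of $w^b$ all vanish; by linearity it holds on $W_K$. Finally $P_2(K)\subset P_3(K)\subset W_K^-$, and for $w\in P_2(K)$ the normal derivative $\partial w/\partial\bm{n}_i=\nabla w\cdot\bm{n}_i$ is affine along each $E_i$, so its edge average equals the mean of its two endpoint values, which is exactly the constraint in (\ref{e: W_K}); therefore $P_2(K)\subset W_K$. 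The only genuinely delicate step is the pivotal fact $\nabla w^b(V_j)=\bm{0}$, which needs the vanishing of $w^b$ along \emph{both} incident edges (a single edge would control only the tangential derivative); everything else is bookkeeping around the two unisolvency lemmas already established.
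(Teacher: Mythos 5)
Your proof is correct and takes essentially the same route as the paper: the same splitting $w=w^-+w^b$, the same two unisolvency lemmas, and the same dimension count coming from the four constraints in (\ref{e: W_K}). The only difference is that you make explicit what the paper treats as immediate—chiefly that $\nabla w^b(V_j)=\bm{0}$ for every bubble function, which is exactly why the constraints restricted to $W_K^b$ reduce to the functionals $\tau_i^b$ and why $\tau_j(w^b)=0$ for all twelve nodal DoFs.
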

\begin{proof}
The four relations in (\ref{e: W_K}) hint that $\dim W_K\geq12$.
It suffices to show if $w\in W_K$ fulfilling $\tau_j(w)=0$ for $j=1,2,\ldots,12$ then $w=0$.
Write $w=w^-+w^b$ with $w^-\in W_K^-$ and $w^b\in W_K^b$.
The assumption above gives $\tau_j(w^-)=0$ as $\tau_j(w^b)=0$ for all $j$,
and by Lemma \ref{lemma: tangential unisol} we find $w^-=0$.
Moreover, the relations in (\ref{e: W_K}) will lead to $\tau^b_i(w)=\tau^b_i(w^b)=0$, $i=1,2,3,4$,
and by Lemma \ref{lemma: bubble unisol} we get $w^b=0$, which means $w=0$ and the unisolvency has been derived.
The next assertion is trivial as $w^b$ are bubble functions.
By noting that the relations in (\ref{e: W_K}) hold for $P_2(K)$,
the last assertion immediately follows.
\end{proof}

The nodal basis representation can be obtained as follows.
Let $\psi_j^-\in W_K^-$, $j=1,2,\ldots,12$ be the nodal basis of $(K,W_K^-,T_K^-)$
and $\psi_j^b\in W_K^b$, $j=1,2,3,4$ satisfy $\tau^b_i(\psi_j^b)=\delta_{ij}$, $i=1,2,3,4$.
Then
\[
\psi_j=\psi_j^-+\sum_{i=1}^4c_{i,j}\psi_i^b,~j=1,2,\ldots,12
\]
will be the nodal basis functions with respect to $(K,W_K,T_K)$,
where the coefficients $c_{i,j}$ are determined through
\[
c_{i,j}=\frac{1}{2}\left(\frac{\partial \psi_j^-}{\partial \bm{n}_{i}}(V_i)+\frac{\partial \psi_j^-}{\partial \bm{n}_{i}}(V_{i+1})\right)-\frac{1}{|E_{i}|}\int_{E_{i}}\frac{\partial \psi_j^-}{\partial \bm{n}_{i}}\,\mathrm{d}s.
\]

\subsection{Applied to fourth order elliptic singular perturbation problems}
\label{ss: fourth}

Let $\Omega\subset\mathbb{R}^2$ be a polygonal domain and $\partial\Omega$ be its boundary.
For a given $f\in L^2(\Omega)$, the fourth order elliptic singular perturbation problem appears as:
Find $u$ such that
\begin{equation}
\label{e: model problem scalar}
\begin{aligned}
&\varepsilon^2\Delta^2u-\Delta u=f~~~&\mbox{in}~\Omega,\\
&u=\frac{\partial u}{\partial\bm{n}}=0~&\mbox{on}~\partial\Omega,
\end{aligned}
\end{equation}
where $\varepsilon$ is the singular perturbation parameter tending to zero.
A weak formulation is to find $u\in H_0^2(\Omega)$ such that
\begin{equation}
\label{e: weak form scalar}
\varepsilon^2(\nabla^2u,\nabla^2v)+(\nabla u,\nabla v)=(f,v),~\forall v\in H_0^2(\Omega).
\end{equation}

Let $\{\mathcal{T}_h\}$ be a family of quasi-uniform and shape-regular partitions
of $\Omega$ consisting of convex quadrilaterals.
For a cell $K\in\mathcal{T}_h$, $h_K$ denotes the diameter of $K$,
and so the parameter $h:=\mbox{max}_{K\in\mathcal {T}_h}h_K$.
The sets of all vertices, interior vertices, boundary vertices,
edges, interior edges and boundary edges are correspondingly denoted by $\mathcal{V}_h$,
$\mathcal{V}_h^i$, $\mathcal{V}_h^b$, $\mathcal{E}_h$, $\mathcal{E}_h^i$ and $\mathcal{E}_h^b$.
For each $E\in\mathcal{E}_h$, $\bm{n}_E$ is a fixed unit vector perpendicular to $E$
and $\bm{t}_E$ is a vector obtained by rotating $\bm{n}_E$ by ninety degree counterclockwisely.
Moreover, for $E\in\mathcal{E}_h^i$, the jump of a function $v$ across $E$ is defined as $[v]_E=v|_{K_1}-v|_{K_2}$,
where $K_1$ and $K_2$ are the cells sharing $E$ as a common edge, and $\bm{n}_E$ points from $K_1$ to $K_2$.
For $E\in\mathcal{E}_h^b$, we set $[v]_E=v|_{K}$ if $E$ is an edge of $K$.

We now define the finite element space $W_h$ by setting
\[
\begin{aligned}
W_h=\Big\{w\in & L^2(\Omega):~w|_K\in W_K,~\forall K\in\mathcal{T}_h,
~\mbox{$w$ and $\nabla w$}\\
&\mbox{are continuous at all $V\in \mathcal{V}_h^i$ and vanishes at all $V\in \mathcal{V}_h^b$}\Big\}.
\end{aligned}
\]
Then the finite element approximation of (\ref{e: weak form scalar}) is: Find $u_h\in W_h$ fulfilling
\begin{equation}
\label{e: approximation form scalar}
\varepsilon^2a_h(u_h,v_h)+b_h(u_h,v_h)=(f,v_h),~\forall v_h\in W_h,
\end{equation}
where
\[
a_h(u_h,v_h)=\sum_{K\in\mathcal{T}_h}(\nabla^2u_h,\nabla^2v_h)_K,
~b_h(u_h,v_h)=\sum_{K\in\mathcal{T}_h}(\nabla u_h,\nabla v_h)_K.
\]
Moreover, we define a discrete semi-norm by setting
\[
\normmm{v}_{\varepsilon,h}^2=\varepsilon^2|v|_{2,h}^2+|v|_{1,h}^2
~\mbox{with}~|v|_{m,h}^2=\sum_{K\in\mathcal{T}_h}|v|_{m,K}^2,~m=1,2.
\]
Clearly, owing to the definition of $W_h$, we observe that $\normmm{\cdot}_{\varepsilon,h}$ is a norm on $V_h$.
Thus, by the Lax-Milgram lemma, the problem (\ref{e: approximation form scalar}) has unique solution.

For each $K\in\mathcal{T}_h$ and $s>0$,
we define the interpolation operator $\mathcal{I}_K:H^{2+s}(K)\rightarrow W_K$
according to Theorem \ref{th: scalar unisol} such that $\tau_j(\mathcal{I}_Kw)=\tau_j(w)$, $j=1,2,\ldots,12$.
Since $\mathcal{I}_Kv=v$ for all $v\in P_2(K)$ and $\{\mathcal{T}_h\}$ is quasi-uniform and shape regular,
we find
\begin{equation}
\label{e: interp err scalar}
|w-\mathcal{I}_hw|_{j,K}\leq Ch^{k-j}|w|_{k,K},~\forall w\in H^k(K)\cap H^{2+s}(K),~j=0,1,2,~k=2,3.
\end{equation}
Then the global interpolation operator $\mathcal{I}_h:H_0^2(\Omega)\cap H^{2+s}(\Omega)\rightarrow W_h$
is set as $\mathcal{I}_h|_K=\mathcal{I}_K$.
Moreover, owing to Theorem $\ref{th: scalar unisol}$, one has
\begin{equation}
\label{e: othogonal scalar}
\int_E[w_h]_E\,\mathrm{d}s=\int_E\left[\frac{\partial w_h}{\partial \bm{n}_E}\right]_E\,\mathrm{d}s=0,~
\forall E\in\mathcal{E}_h
\end{equation}
via (\ref{e: scalar tangent linear relation}) and (\ref{e: W_K}).
The Strang lemma says
\[
\normmm{u-u_h}_{\varepsilon,h}\leq C\left(
\inf_{v_h\in W_h}\normmm{u-v_h}_{\varepsilon,h}
+\sup_{w_h\in W_h}\frac{E_{\varepsilon,h}(u,w_h)}{\normmm{w_h}_{\varepsilon,h}}\right)
\]
with the consistency error
\[
E_{\varepsilon,h}(u,w_h)=\varepsilon^2a_h(u,w_h)+b_h(u,w_h)-(f,w_h).
\]
Hence, applying the proof of Theorem 1 in Chen et al.'s work \cite{Chen2005} and invoking (\ref{e: interp err scalar}),
(\ref{e: othogonal scalar}), we get
\[
\begin{aligned}
&\inf_{v_h\in W_h}\normmm{u-v_h}_{\varepsilon,h}\leq Ch\normmm{u-\mathcal{I}_hu}_{\varepsilon,h}
\leq Ch(\varepsilon|u|_3+|u|_2),\\
&E_{\varepsilon,h}(u,w_h)\leq Ch(\varepsilon|u|_3+|u|_2+\|f\|_0)\normmm{w_h}_{\varepsilon,h},~\forall w_h\in W_h,
\end{aligned}
\]
which leads to the following convergence result.

\begin{theorem}
\label{th: converge scalar}
Let $u\in H^3(\Omega)$ and $u_h\in W_h$ be the solutions of (\ref{e: weak form scalar}) and (\ref{e: approximation form scalar}), respectively.
Then
\[
\normmm{u-u_h}_{\varepsilon,h}\leq Ch(\varepsilon|u|_3+|u|_2+\|f\|_0).
\]
\end{theorem}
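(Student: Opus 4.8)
The plan is to run the second Strang lemma for nonconforming elements, exactly as set up in the discussion preceding the statement, and then assemble the two estimates that have already been prepared. Since $\normmm{\cdot}_{\varepsilon,h}$ is a norm on $W_h$ and the energy form satisfies $\varepsilon^2a_h(w,w)+b_h(w,w)=\normmm{w}_{\varepsilon,h}^2$ together with the obvious Cauchy--Schwarz continuity bound, coercivity and boundedness hold with constant one, so the Strang lemma applies directly and gives
\[
\normmm{u-u_h}_{\varepsilon,h}\leq C\left(\inf_{v_h\in W_h}\normmm{u-v_h}_{\varepsilon,h}+\sup_{w_h\in W_h}\frac{E_{\varepsilon,h}(u,w_h)}{\normmm{w_h}_{\varepsilon,h}}\right).
\]
It then remains to bound the approximation term and the consistency term separately and combine.

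For the approximation term I would take $v_h=\mathcal{I}_hu$, which is well-defined since $u\in H^3(\Omega)\subset H^{2+s}(\Omega)$ with $s=1$, and invoke the interpolation estimate (\ref{e: interp err scalar}). Because $P_2(K)\subset W_K$ by Theorem \ref{th: scalar unisol}, the operator reproduces quadratics, so choosing $k=3$ for the second-order part and $k=2$ for the first-order part yields $\varepsilon|u-\mathcal{I}_hu|_{2,h}\leq C\varepsilon h|u|_3$ and $|u-\mathcal{I}_hu|_{1,h}\leq Ch|u|_2$. Summing over cells therefore produces $\inf_{v_h}\normmm{u-v_h}_{\varepsilon,h}\leq Ch(\varepsilon|u|_3+|u|_2)$; the $\varepsilon$-weighting in $\normmm{\cdot}_{\varepsilon,h}$ is precisely what lets exactly one power of $\varepsilon$ and one power of $h$ survive in each term.

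The substantive step is the consistency error $E_{\varepsilon,h}(u,w_h)=\varepsilon^2a_h(u,w_h)+b_h(u,w_h)-(f,w_h)$. Integrating by parts cell by cell and using the strong equation (\ref{e: model problem scalar}) cancels the volume residual against $(f,w_h)$ and leaves a sum of edge integrals carrying the jumps $[w_h]_E$ and $[\partial w_h/\partial\bm{n}_E]_E$. The crucial ingredient is the weak continuity (\ref{e: othogonal scalar}), that is, $\int_E[w_h]_E\,\mathrm{d}s=\int_E[\partial w_h/\partial\bm{n}_E]_E\,\mathrm{d}s=0$ on every edge, which allows me to subtract from each smooth coefficient its edge average (or an appropriate $P_0/P_1$ projection) without altering the integral, and then to estimate by Cauchy--Schwarz together with trace inequalities and Bramble--Hilbert arguments. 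This is exactly the mechanism in the proof of Theorem 1 of Chen et al.~\cite{Chen2005}, which I would follow verbatim, and it delivers $E_{\varepsilon,h}(u,w_h)\leq Ch(\varepsilon|u|_3+|u|_2+\|f\|_0)\normmm{w_h}_{\varepsilon,h}$.

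The main obstacle I anticipate is tracking the powers of $\varepsilon$ so that the final bound is genuinely uniform as $\varepsilon\to0$: the fourth-order contribution $\varepsilon^2a_h$ must be organized so that only a single factor $\varepsilon$ emerges, pairing $\varepsilon|u|_3$ against the $\varepsilon|w_h|_{2,h}$ hidden inside $\normmm{w_h}_{\varepsilon,h}$, while the second-order part and the load term supply the $\varepsilon$-free quantities $|u|_2$ and $\|f\|_0$. The weak continuity (\ref{e: othogonal scalar}) is just strong enough to annihilate the leading boundary terms in both the $a_h$ and $b_h$ residuals, which is what makes the uniform $O(h)$ rate possible. Substituting the two estimates into the Strang bound then yields $\normmm{u-u_h}_{\varepsilon,h}\leq Ch(\varepsilon|u|_3+|u|_2+\|f\|_0)$, as claimed.
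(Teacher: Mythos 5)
Your proposal is correct and follows essentially the same route as the paper: the Strang lemma splits the error, the choice $v_h=\mathcal{I}_hu$ with the interpolation estimate (\ref{e: interp err scalar}) handles the approximation term, and the consistency error is bounded by invoking the argument of Theorem 1 in Chen et al.~\cite{Chen2005} together with the weak continuity relations (\ref{e: othogonal scalar}). This is precisely the paper's own proof, with your discussion of the edge-jump cancellation simply spelling out what the citation to \cite{Chen2005} encapsulates.
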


From Theorem \ref{th: converge scalar},
this element ensures a linear convergence order with respect to $h$, uniformly in $\varepsilon$,
provided that $\varepsilon|u|_3$, $|u|_2$ are uniformly bounded.
However these terms might blow up when $\varepsilon$ tends to zero.
The next result, following a similar line of Theorem 4.3 in \cite{Wang2013},
guarantees a uniform convergence rate under the impact of such boundary layers.

\begin{theorem}
\label{th: uniform err scalar}
Assume $\Omega$ is a convex domain.
Let $u\in H^3(\Omega)$ and $u_h\in W_h$ be the solutions of (\ref{e: weak form scalar}) and (\ref{e: approximation form scalar}), respectively.
Then it holds the uniform error estimate
\[
\normmm{u-u_h}_{\varepsilon,h}\leq Ch^{\frac{1}{2}}\|f\|_0.
\]
\end{theorem}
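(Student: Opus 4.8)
The plan is to run the Strang estimate already displayed before the theorem and to bound both the approximation term $\inf_{v_h\in W_h}\normmm{u-v_h}_{\varepsilon,h}$ and the consistency term $E_{\varepsilon,h}(u,w_h)$ by $Ch^{1/2}\|f\|_0$ \emph{uniformly} in $\varepsilon$. The estimate of Theorem \ref{th: converge scalar} alone does not suffice, because on a convex $\Omega$ the solution of (\ref{e: weak form scalar}) carries an $\varepsilon$-width boundary layer, so that only the sharp $\varepsilon$-uniform a priori bounds
\[
|u|_1\le C\|f\|_0,\qquad \varepsilon^{1/2}|u|_2\le C\|f\|_0,\qquad \varepsilon^{3/2}|u|_3\le C\|f\|_0
\]
are available; inserting them into Theorem \ref{th: converge scalar} produces the factor $h\varepsilon^{-1/2}$, which blows up as $\varepsilon\to0$. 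The first step is therefore to record these bounds together with the layer splitting $u=u_0+u_r$, where $u_0\in H^2(\Omega)\cap H_0^1(\Omega)$ solves the reduced problem $-\Delta u_0=f$ (so $\|u_0\|_2\le C\|f\|_0$ by convexity, and $u_0\in H^{2+s}(\Omega)$ so that $\mathcal{I}_hu_0$ is well defined), while the remainder obeys $|u_r|_1\le C\varepsilon^{1/2}\|f\|_0$ and $|u_r|_2\le C\varepsilon^{-1/2}\|f\|_0$, in the spirit of \cite{Wang2013,Chen2005}.

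The argument then splits into two regimes. When $h\le\varepsilon$, I simply invoke Theorem \ref{th: converge scalar} and the regularity above: since $\varepsilon|u|_3\le C\varepsilon^{-1/2}\|f\|_0$ and $|u|_2\le C\varepsilon^{-1/2}\|f\|_0$, the right-hand side is at most $Ch\varepsilon^{-1/2}\|f\|_0\le Ch^{1/2}\|f\|_0$, using $\varepsilon^{-1/2}\le h^{-1/2}$. The substance lies in the complementary regime $\varepsilon<h$, where the canonical interpolant of $u$ is useless for the $H^1$ part. For the approximation term I would instead take the nonstandard comparison function $v_h=\mathcal{I}_hu_0$ and write
\[
\normmm{u-\mathcal{I}_hu_0}_{\varepsilon,h}\le\normmm{u_r}_{\varepsilon,h}+\normmm{u_0-\mathcal{I}_hu_0}_{\varepsilon,h}.
\]
Here $\normmm{u_r}_{\varepsilon,h}\le C(\varepsilon|u_r|_2+|u_r|_1)\le C\varepsilon^{1/2}\|f\|_0\le Ch^{1/2}\|f\|_0$, while (\ref{e: interp err scalar}) with $k=2$ and $\|u_0\|_2\le C\|f\|_0$ gives $\normmm{u_0-\mathcal{I}_hu_0}_{\varepsilon,h}\le C(\varepsilon+h)|u_0|_2\le Ch^{1/2}\|f\|_0$.

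For the consistency term in the regime $\varepsilon<h$ I would not reduce the full $u$ to edge integrals, since that re-introduces $|u|_3$; instead I use the reduced equation to split
\[
E_{\varepsilon,h}(u,w_h)=\big(b_h(u_0,w_h)-(f,w_h)\big)+b_h(u_r,w_h)+\varepsilon^2a_h(u,w_h).
\]
Integrating $b_h(u_0,w_h)$ by parts elementwise and using $-\Delta u_0=f$ cancels $(f,w_h)$ and leaves only the edge contributions $\sum_E\int_E\partial_{\bm{n}}u_0\,[w_h]_E$; the weak continuity (\ref{e: othogonal scalar}) lets me subtract the edge mean of $\partial_{\bm{n}}u_0$, so a trace and Bramble--Hilbert estimate yields the bound $Ch|u_0|_2\,|w_h|_{1,h}\le Ch\|f\|_0\normmm{w_h}_{\varepsilon,h}$. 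The two remaining pieces are absorbed by the smallness of $\varepsilon$, namely $b_h(u_r,w_h)\le|u_r|_1|w_h|_{1,h}\le C\varepsilon^{1/2}\|f\|_0\normmm{w_h}_{\varepsilon,h}$ and $\varepsilon^2a_h(u,w_h)\le(\varepsilon|u|_2)(\varepsilon|w_h|_{2,h})\le C\varepsilon^{1/2}\|f\|_0\normmm{w_h}_{\varepsilon,h}$. Since $\varepsilon<h$, all three are $O(h^{1/2}\|f\|_0\normmm{w_h}_{\varepsilon,h})$, and collecting the two regimes in Strang's lemma proves the claim.

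I expect the main obstacle to be the first step: justifying the sharp, $\varepsilon$-uniform regularity and the layer decomposition $u=u_0+u_r$ on the convex polygon, which is the analytic heart of the estimate and is typically imported from the boundary-layer theory of singular perturbations. Within the finite element part, the delicate point is the consistency term for $\varepsilon<h$: one must avoid the naive edge reduction of the whole solution and instead peel off the reduced solution $u_0$, so that its nonconforming error is governed by $\|u_0\|_2$ alone through (\ref{e: othogonal scalar}), while the layer remainder and the entire $\varepsilon^2$-term are controlled purely by their $O(\varepsilon^{1/2})$ size.
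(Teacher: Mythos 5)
Your proposal is correct and takes essentially the same route as the paper: the paper gives no written proof of Theorem \ref{th: uniform err scalar}, deferring to Theorem 4.3 of \cite{Wang2013}, and that argument is precisely your combination of the $\varepsilon$-uniform a priori bounds with the reduced-problem splitting $u=u_0+u_r$ ($-\Delta u_0=f$), the two regimes $h\le\varepsilon$ (plain use of Theorem \ref{th: converge scalar} with $\varepsilon^{-1/2}\le h^{-1/2}$) and $\varepsilon<h$ (interpolating $u_0$ instead of $u$, and splitting the consistency term so that the edge contribution is controlled by $\|u_0\|_2$ via the weak continuity (\ref{e: othogonal scalar}) while $b_h(u_r,\cdot)$ and $\varepsilon^2a_h(u,\cdot)$ are absorbed by their $O(\varepsilon^{1/2})$ size). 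Your explicit attention to $u_0\in H^{2+s}(\Omega)$, so that the nodal interpolant $\mathcal{I}_hu_0$ is well defined, and to the provenance of the layer regularity on a convex domain, supplies exactly the details the paper leaves implicit.
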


\section{Finite element for Brinkman problems and the associated exact sequence}
\label{s: 1-form}
\subsection{Construction of the finite element}

Let us turn to the construction of a vector-valued nodal type finite element.
As in the scalar case,
we shall prove the unisolvency of two auxiliary elements.
For a general convex quadrilateral $K$,
the element $(K,\bm{V}_K^-,\Sigma_K^-)$ and
the $\bm{H}_0(\mathrm{div};K)$-bubble element $(K,\bm{V}_K^b,\Sigma_K^b)$ are defined through
\[
\begin{aligned}
\bm{V}_K^-&=[P_1(K)]^2\oplus\mathrm{span}\{\bm{\mathrm{curl}}\,x^3,\bm{\mathrm{curl}}\,x^2y,
\bm{\mathrm{curl}}\,xy^2,\bm{\mathrm{curl}}\,y^3,\curl\,\phi_1,\curl\,\phi_2\},\\
\Sigma_K^-&=\{\sigma_j,~j=1,2,\ldots,12\},~\bm{V}_K^b=\curl\,W_K^b,~\Sigma_K^b=\{\sigma_j^b,~j=1,2,3,4\},
\end{aligned}
\]
where $\phi_1$, $\phi_2$ have been given in Definition \ref{d: anxiliary scalar element},
and the DoFs are
\[
\sigma_j(\bm{v})=\int_{E_j}\bm{v}\cdot\bm{n}_j\,\mathrm{d}s,
~(\sigma_{j+4}(\bm{v}),\sigma_{j+8}(\bm{v}))^T=\bm{v}(V_j),
~\sigma_j^b(\bm{v})=\int_{E_j}\bm{v}\cdot\bm{t}_j\,\mathrm{d}s,
~j=1,2,3,4.
\]

\begin{lemma}
Both $(K,\bm{V}_K^-,\Sigma_K^-)$ and $(K,\bm{V}_K^b,\Sigma_K^b)$ are well-defined.
\end{lemma}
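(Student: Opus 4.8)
The plan is to deduce both unisolvency results from the scalar theory already established, exploiting the fact that all of $\bm{V}_K^b$ and the divergence-free part of $\bm{V}_K^-$ arise as scalar curls. Throughout I use the elementary pointwise identities (valid along any edge with the paper's orientation of $\bm{n}$, $\bm{t}$)
\[
\curl w\cdot\bm{n}_i=\frac{\partial w}{\partial\bm{t}_i},\qquad \curl w\cdot\bm{t}_i=-\frac{\partial w}{\partial\bm{n}_i},
\]
so that, after integrating along $E_i$,
\[
\int_{E_i}\curl w\cdot\bm{n}_i\,\mathrm{d}s=w(V_{i+1})-w(V_i),\qquad \int_{E_i}\curl w\cdot\bm{t}_i\,\mathrm{d}s=-|E_i|\,\tau_i^b(w).
\]
These translate the vector functionals of a curl field into scalar functionals, which is exactly what lets the scalar lemmas carry the argument.

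For the bubble element $(K,\bm{V}_K^b,\Sigma_K^b)$ I would argue as follows. Since $W_K^b\subset H_0^1(K)$ contains no nonzero constant, $\curl$ is injective on $W_K^b$, hence $\curl:W_K^b\to\bm{V}_K^b$ is an isomorphism and $\dim\bm{V}_K^b=4$ matches the four functionals $\sigma_j^b$. If $\bm{v}=\curl w\in\bm{V}_K^b$ satisfies $\sigma_j^b(\bm{v})=0$ for all $j$, the second identity above gives $\tau_j^b(w)=0$, $j=1,2,3,4$, whence $w=0$ by Lemma~\ref{lemma: bubble unisol} and therefore $\bm{v}=0$. This settles the bubble case immediately.

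For $(K,\bm{V}_K^-,\Sigma_K^-)$ the first step is structural: because curl fields are divergence free and $\mathrm{div}[P_1(K)]^2\subseteq P_0(K)$, every $\bm{v}\in\bm{V}_K^-$ has constant divergence; moreover $\curl W_K^-\subseteq\bm{V}_K^-$ with $\dim\curl W_K^-=11$ (the curl annihilates only the constants of the $12$-dimensional $W_K^-$), so the divergence-free subspace of $\bm{V}_K^-$ coincides with $\curl W_K^-$. Now suppose $\bm{v}\in\bm{V}_K^-$ is annihilated by all twelve functionals. Summing the four normal functionals and invoking the divergence theorem, $|K|\,\mathrm{div}\,\bm{v}=\int_{\partial K}\bm{v}\cdot\bm{n}\,\mathrm{d}s=\sum_{i=1}^4\sigma_i(\bm{v})=0$, so $\mathrm{div}\,\bm{v}=0$ and hence $\bm{v}=\curl w$ for some $w\in W_K^-$. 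The remaining vanishing functionals then read, via the displayed identities, $w(V_{i+1})-w(V_i)=0$ for each $i$ (so $w$ takes a common value $c_0$ at all four vertices) together with $\curl w(V_j)=0$, i.e. $\nabla w(V_j)=0$ for $j=1,\dots,4$. Replacing $w$ by $w-c_0\in W_K^-$, all twelve scalar DoFs $\tau_k(w-c_0)$ vanish, so $w-c_0=0$ by Lemma~\ref{lemma: tangential unisol}, giving $\bm{v}=\curl w=0$.

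The main obstacle, and the point needing the most care, is the structural identification of the divergence-free part of $\bm{V}_K^-$ with $\curl W_K^-$: one must verify the inclusion $\curl W_K^-\subseteq\bm{V}_K^-$ (which rests on $\curl P_2(K)\subseteq[P_1(K)]^2$) alongside the dimension count, so that the divergence-theorem reduction genuinely lands $\bm{v}$ in $\curl W_K^-$ rather than in a strictly larger space. Once this reduction is secured, everything else is a direct transcription of the scalar functionals through the curl, and the only remaining bookkeeping is fixing the orientation conventions so that the two pointwise identities carry the correct signs.
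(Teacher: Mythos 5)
Your proposal is correct and takes essentially the same route as the paper: sum the four normal DoFs and apply the divergence theorem to eliminate the non-solenoidal part, then translate the remaining vector DoFs through the curl identities into scalar DoFs and invoke Lemma \ref{lemma: tangential unisol} (resp.\ Lemma \ref{lemma: bubble unisol} for the bubble element, whose proof the paper omits as ``much simpler''). The only real difference is presentational: you justify the identification of the divergence-free subspace of $\bm{V}_K^-$ with $\curl\,W_K^-$ by a dimension count, whereas the paper simply asserts the decomposition \eqref{e: vk decompose}, writes $\bm{v}=c\bm{v}_0+\curl\,w$, and shows $c=0$.
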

\begin{proof}
We only deal with $(K,\bm{V}_K^-,\Sigma_K^-)$ as the latter is much simpler.
Define $\bm{v}_0=(x,y)^T$, then
\begin{equation}
\label{e: vk decompose}
\bm{V}_K^-=\mathrm{span}\{\bm{v}_0\}\oplus\mathrm{span}\{\curl\,w:~w\in W_K^-\}.
\end{equation}
Suppose $\bm{v}=c\bm{v}_0+\curl\,w\in\bm{V}_K^-$ for some $w\in W_K^-$ such that $\sigma_j(\bm{v})=0$ for all $j$,
then
\begin{equation}
\label{e: sum 1 to 4}
c\sigma_j(\bm{v}_0)+\sigma_j(\curl\,w)=0,~j=1,2,\ldots,12.
\end{equation}
However, we notice that $\mathrm{div}\,\bm{v}_0=2$, and by Green's formula
\[
\sum_{i=1}^4\sigma_i(\curl\,w)=\int_K\mathrm{div}\,\curl\,w\,\mathrm{d}\bm{x}=0,
~\sum_{i=1}^4\sigma_i(\bm{v}_0)=\int_K\mathrm{div}\,\bm{v}_0\,\mathrm{d}\bm{x}=2|K|\neq0.
\]
Summing over (\ref{e: sum 1 to 4}) for $j=1,2,3,4$ gives $c=0$,
and therefore $\sigma_j(\curl\,w)=0$, $j=1,2,\ldots,12$.
Hence, it suffices to show $\curl\,w=\bm{0}$.
Indeed, we can select $w$ such that $w(V_1)=0$ without changing the value of each $\sigma_j(\curl\,w)$.
Then
\begin{equation}
\label{e: point=0}
w(V_{i+1})=w(V_i)+\int_{E_i}\frac{\partial w}{\partial\bm{t}}\,\mathrm{d}s=w(V_i)+\sigma_i(\curl\,w)=0,
~i=1,2,3.
\end{equation}
Moreover,
\begin{equation}
\label{e: grad=0}
\nabla w(V_j)=(-\sigma_{j+8}(\curl\,w),\sigma_{j+4}(\curl\,w))^T=\bm{0},~j=1,2,3,4.
\end{equation}
As a consequence of (\ref{e: point=0}),(\ref{e: grad=0}) and Lemma \ref{lemma: tangential unisol},
we find $w=0$, and so $\curl\,w=\bm{0}$, which implies $\bm{v}=\bm{0}$.
The proof is done.
\end{proof}

Parallel to Lemma \ref{e: scalar tangent linear relation},
the following fact is crucial for the convergence in the Darcy limit.

\begin{lemma}
\label{lemma: vector darcy}
For all $\bm{v}\in\bm{V}_K^-$, it holds that
\begin{equation}
\label{e: vector tangent linear relation}
\frac{1}{|E_i|}\int_{E_i}\bm{v}\cdot\bm{n}\xi_i\,\mathrm{d}s=
\frac{1}{6}\left(\bm{v}(V_{i+1})-\bm{v}(V_{i})\right)\cdot\bm{n},
~i=1,2,3,4,
\end{equation}
where $\xi_i\in P_1(E_i)$ has been defined in the proof of Lemma \ref{lemma: scalar tangent linear relation}.
\end{lemma}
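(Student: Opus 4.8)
The plan is to exploit the direct sum decomposition (\ref{e: vk decompose}), which writes any $\bm{v}\in\bm{V}_K^-$ as $\bm{v}=c\bm{v}_0+\curl\,w$ with $c\in\mathbb{R}$ and $w\in W_K^-$, and to verify (\ref{e: vector tangent linear relation}) separately on each summand by linearity. The central observation is that along the straight edge $E_i$, where $\bm{n}_i$ and $\bm{t}_i$ are constant, the normal trace of a curl equals a tangential derivative, $\curl\,w\cdot\bm{n}_i=\frac{\partial w}{\partial\bm{t}_i}$; since this is a pointwise identity on $E_i$, it holds in particular at the endpoints $V_i$ and $V_{i+1}$. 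With it, the right-hand side of (\ref{e: vector tangent linear relation}) for $\bm{v}=\curl\,w$ becomes $\frac{1}{6}(\frac{\partial w}{\partial\bm{t}_i}(V_{i+1})-\frac{\partial w}{\partial\bm{t}_i}(V_i))$, so the target identity is reduced to a purely scalar statement about $w\in W_K^-$.

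First I would treat the curl part. Combining the integration-by-parts identity (\ref{e: integral by parts}) with the scalar relation (\ref{e: scalar tangent linear relation}) of Lemma \ref{lemma: scalar tangent linear relation}, both valid for every $w\in W_K^-$, the two expressions for $\frac{1}{|E_i|}\int_{E_i}w\,\mathrm{d}s$ can be equated and rearranged to yield
\[
\frac{1}{|E_i|}\int_{E_i}\frac{\partial w}{\partial\bm{t}_i}\xi_i\,\mathrm{d}s=\frac{1}{6}\left(\frac{\partial w}{\partial\bm{t}_i}(V_{i+1})-\frac{\partial w}{\partial\bm{t}_i}(V_i)\right).
\]
Rewriting the left side as $\frac{1}{|E_i|}\int_{E_i}\curl\,w\cdot\bm{n}_i\,\xi_i\,\mathrm{d}s$ and the right side through the vertex values of $\curl\,w$ gives exactly (\ref{e: vector tangent linear relation}) for $\bm{v}=\curl\,w$. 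Crucially, this step inherits from Lemma \ref{lemma: scalar tangent linear relation} the nontrivial fact that the identity survives for the enriching functions $\phi_1,\phi_2$, whose tangential derivatives are not of degree at most two on $E_i$.

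For the remaining summand $\bm{v}_0=(x,y)^T$, the scalar $\bm{v}_0\cdot\bm{n}_i$ is an affine function on $E_i$, so the integrand $(\bm{v}_0\cdot\bm{n}_i)\xi_i$ is a polynomial of degree at most two in the arc-length parameter. Simpson's quadrature rule is exact for such integrands, and since $\xi_i$ vanishes at the midpoint of $E_i$ while taking the values $\mp 1$ at $V_i,V_{i+1}$, the midpoint contribution drops out and one obtains $\frac{1}{|E_i|}\int_{E_i}(\bm{v}_0\cdot\bm{n}_i)\xi_i\,\mathrm{d}s=\frac{1}{6}\big((\bm{v}_0\cdot\bm{n}_i)(V_{i+1})-(\bm{v}_0\cdot\bm{n}_i)(V_i)\big)$, which is (\ref{e: vector tangent linear relation}) for $\bm{v}_0$. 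Adding the two contributions and using linearity completes the argument.

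The routine parts are the two quadrature and integration-by-parts computations; the one place demanding care is the orientation bookkeeping for the normal-trace--tangential-derivative identity $\curl\,w\cdot\bm{n}_i=\frac{\partial w}{\partial\bm{t}_i}$, namely fixing the sign convention of $\curl$ against the stipulation that $\bm{t}_i$ is $\bm{n}_i$ rotated counterclockwise by ninety degrees, and then confirming that Lemma \ref{lemma: scalar tangent linear relation} may legitimately be invoked on all of $W_K^-$ so that the delicate $\phi_1,\phi_2$ case is already absorbed. I expect no genuine obstacle beyond this bookkeeping, since the analytically subtle content has been established in the scalar lemma.
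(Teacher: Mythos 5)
Your proposal is correct and takes essentially the same route as the paper's proof: decompose $\bm{v}$ via (\ref{e: vk decompose}), dispatch $\bm{v}_0$ by the exactness of Simpson's rule, and obtain the curl part by substituting (\ref{e: scalar tangent linear relation}) into (\ref{e: integral by parts}). You merely spell out what the paper leaves implicit, namely the normal-trace identity $\curl\,w\cdot\bm{n}_i=\partial w/\partial\bm{t}_i$ and its orientation convention, which is a harmless (indeed helpful) addition.
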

\begin{proof}
According to (\ref{e: vk decompose}), we shall verify this relation for $\bm{v}_0$ and all $\curl\,w$, $w\in W_K^-$.
Since $\bm{v}_0\in [P_1(K)]^2$, the Simpson quadrature rule ensures (\ref{e: vector tangent linear relation}).
On the other hand, if $w\in W_K^-$,
substituting (\ref{e: scalar tangent linear relation}) into (\ref{e: integral by parts})
will derive (\ref{e: vector tangent linear relation}) for $\bm{v}=\curl\,w$, which completes the proof.
\end{proof}

Now we introduce the nodal type vector-valued element for Brinkman problems.

\begin{definition}
\label{d: vector element}
The finite element $(K,\bm{V}_K,\Sigma_K)$ is determined through:
\begin{itemize}
\setlength{\itemsep}{-\itemsep}
\item $K$ is a convex quadrilateral;
\item $\bm{V}_K$ is the shape function space:
\begin{equation}
\label{e: V_K}
\begin{aligned}
\bm{V}_K=\left\{\bm{v}\in \bm{V}_K^-\oplus \bm{V}_K^b:
~\frac{1}{|E_{i}|}\int_{E_{i}}\bm{v}\cdot\bm{t}_i\,\mathrm{d}s
=\frac{1}{2}\left(\bm{v}(V_i)+\bm{v}(V_{i+1})\right)\cdot\bm{t}_i,~i=1,2,3,4\right\};
\end{aligned}
\end{equation}
\item $\Sigma_K=\Sigma_K^-$ is the DoF set.
\end{itemize}
\end{definition}

\begin{theorem}
\label{th: vector unisol}
The element $(K,\bm{V}_K,\Sigma_K)$ is well-defined.
Moreover, (\ref{e: vector tangent linear relation}) holds for all $\bm{v}\in \bm{V}_K$ and $[P_1(K)]^2\subset \bm{V}_K$.
\end{theorem}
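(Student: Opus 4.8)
The plan is to mirror the unisolvency argument for the scalar element in Theorem~\ref{th: scalar unisol}, the key structural fact being that the whole bubble space $\bm{V}_K^b$ is annihilated by every functional in $\Sigma_K^-$. First I would count dimensions: $\curl$ is injective on the nonconstant space $W_K^b$, so $\dim\bm{V}_K^b=4$ and $\dim(\bm{V}_K^-\oplus\bm{V}_K^b)=16$; the four tangential aggregation identities in (\ref{e: V_K}) then force $\dim\bm{V}_K\ge12$. It therefore suffices to prove that $\bm{v}=\bm{v}^-+\bm{v}^b\in\bm{V}_K$ (with $\bm{v}^-\in\bm{V}_K^-$ and $\bm{v}^b\in\bm{V}_K^b$) together with $\sigma_j(\bm{v})=0$ for $j=1,\dots,12$ imply $\bm{v}=\bm{0}$.

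The decisive step is to check that $\sigma_j(\bm{v}^b)=0$ for all $j$. Writing $\bm{v}^b=\curl\,w^b$ with $w^b=b_0g\in W_K^b$, I note that $b_0=l_1l_2l_3l_4$ has a double zero at each vertex (two of its factors vanish there), so $b_0(V_j)=0$ and $\nabla b_0(V_j)=\bm{0}$; the product rule then yields $\nabla w^b(V_j)=\bm{0}$, hence $\bm{v}^b(V_j)=\bm{0}$ and the eight vertex functionals $\sigma_{j+4},\sigma_{j+8}$ vanish on $\bm{v}^b$. For the four normal means I use $\curl\,w^b\cdot\bm{n}_i=\partial w^b/\partial\bm{t}_i$ on $E_i$, so that $\sigma_i(\bm{v}^b)=\int_{E_i}\partial w^b/\partial\bm{t}_i\,\mathrm{d}s=w^b(V_{i+1})-w^b(V_i)=0$ because $w^b$ is a bubble. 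Consequently $\sigma_j(\bm{v}^-)=\sigma_j(\bm{v})=0$ for every $j$, and the already established unisolvency of $(K,\bm{V}_K^-,\Sigma_K^-)$ gives $\bm{v}^-=\bm{0}$. Then $\bm{v}=\bm{v}^b$, and substituting $\bm{v}^b(V_j)=\bm{0}$ into the constraints (\ref{e: V_K}) yields $\sigma_i^b(\bm{v}^b)=\int_{E_i}\bm{v}^b\cdot\bm{t}_i\,\mathrm{d}s=0$ for all $i$; the unisolvency of the bubble element $(K,\bm{V}_K^b,\Sigma_K^b)$ then forces $\bm{v}^b=\bm{0}$, completing the unisolvency.

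The two remaining assertions follow as in the scalar case. Identity (\ref{e: vector tangent linear relation}) holds on $\bm{V}_K^-$ by Lemma~\ref{lemma: vector darcy}; on the bubble part both sides vanish, since $w^b|_{E_i}=0$ makes $\bm{v}^b\cdot\bm{n}_i=\partial w^b/\partial\bm{t}_i=0$ along $E_i$ while $\bm{v}^b(V_j)=\bm{0}$ kills the right-hand side, so the relation extends to all of $\bm{V}_K$ by linearity. For the inclusion $[P_1(K)]^2\subset\bm{V}_K$, I note that $[P_1(K)]^2\subset\bm{V}_K^-$ and that for affine $\bm{v}$ the integrand $\bm{v}\cdot\bm{t}_i$ is affine along $E_i$, whence the trapezoidal rule (exact on affine functions) verifies the aggregation constraints in (\ref{e: V_K}).

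I expect the main obstacle to be precisely the decoupling of the bubble space from $\Sigma_K^-$, that is, verifying that $\nabla w^b$ vanishes at the vertices via the double zero of $b_0$ and that the normal means of $\curl\,w^b$ vanish through the edge-tangential integration; once this is secured, the scalar template of Theorem~\ref{th: scalar unisol} transfers essentially verbatim.
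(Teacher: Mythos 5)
Your proof is correct and follows exactly the route the paper intends: its own proof of this theorem is omitted with the remark that it parallels Theorem~\ref{th: scalar unisol}, and your write-up is a faithful instantiation of that scalar template, with the decisive decoupling step (the double zero of $b_0$ at the vertices killing the vertex functionals, and $\curl\,w^b\cdot\bm{n}_i=\partial w^b/\partial\bm{t}_i$ killing the normal means) carried out correctly. The remaining assertions, namely extending (\ref{e: vector tangent linear relation}) to the bubble part and checking (\ref{e: V_K}) for $[P_1(K)]^2$ via exactness of the trapezoidal rule on affine traces, are likewise handled as the paper would.
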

\begin{proof}
The proof is very similar to that of Theorem \ref{th: scalar unisol} and thus omitted.
\end{proof}

\subsection{Applied to Brinkman problems}

Consider the following Brinkman problem of porous media flow over $\Omega$:
For given $\bm{f}\in [L^2(\Omega)]^2$ and $g\in L_0^2(\Omega)$,
find the velocity $\bm{u}$ and the pressure $p$ satisfying
\begin{equation}
\label{e: model problem}
\begin{aligned}
-\mathrm{div}\,(\nu\nabla\bm{u})+\alpha\bm{u}+\nabla p&=\bm{f}~~~~\mbox{in }\Omega,\\
\mathrm{div}\,\bm{u}&=g\,~~~~\mbox{in }\Omega,\\
\bm{u}&=\bm{0}~~~~\mbox{on }\partial\Omega.
\end{aligned}
\end{equation}
Here we assume that parameters $\nu,\alpha\geq0$ are constants but $\nu\alpha\neq0$.
A weak formulation of (\ref{e: model problem}) is to find $(\bm{u},p)\in [H_0^1(\Omega)]^2\times L_0^2(\Omega)$
satisfying
\begin{equation}
\label{equation: continuous variational formulation}
\begin{aligned}
a(\bm{u},\bm{v})-b(\bm{v},p)&=(\bm{f},\bm{v}),~\forall \bm{v}\in[H_0^1(\Omega)]^2,\\
b(\bm{u},q)&=(g,q),~~\forall q\in L_0^2(\Omega),
\end{aligned}
\end{equation}
with the bilinear forms
\[
a(\bm{u},\bm{v})=\nu(\nabla\bm{u},\nabla\bm{v})+\alpha(\bm{u},\bm{v}),
~b(\bm{v},q)=(\mathrm{div}\,\bm{v},q).
\]
This problem has a unique solution due to the following inf-sup condition
\begin{equation}
\label{e: inf-sup}
\sup_{\bm{v}\in[H_0^1(\Omega)]^2}\frac{b(\bm{v},q)}{\|\bm{v}\|_1}\geq C\|q\|_0,~\forall q\in L_0^2(\Omega)
\end{equation}
according to \cite{Boffi2013} for all possible $\nu$ and $\alpha$.

Let $\{\mathcal{T}_h\}$ be given as in Subsection \ref{ss: fourth}.
We select the following finite element spaces $\bm{V}_h$ and $P_h$:
\[
\begin{aligned}
\bm{V}_h&=\Big\{\bm{v}\in [L^2(\Omega)]^2:~\bm{v}|_K\in\bm{V}_K,~\forall K\in\mathcal{T}_h,
~\int_E[\bm{v}\cdot\bm{n}_E]_E\,\mathrm{d}s=0\mbox{ for all $E\in \mathcal{E}_h$},\\
&~~~~~~~~~~~~~~~
~~~~~~~~~~\mbox{and $\bm{v}$ is continuous at all $V\in \mathcal{V}_h^i$ and vanishes at all $V\in \mathcal{V}_h^b$}\Big\}.\\
P_h&=\left\{q\in L_0^2(\Omega):~q|_K\in P_0(K),~\forall K\in\mathcal{T}_h\right\}.
\end{aligned}
\]
If we write $\mathrm{div}_h|_K=\mathrm{div}$ on $K$,
then we have the divergence-free condition $\mathrm{div}_h\,\bm{V}_h\subset P_h$.
A discrete formulation of (\ref{equation: continuous variational formulation}) will be given as:
Find $(\bm{u}_h,p_h)\in \bm{V}_h\times P_h$, such that
\begin{equation}
\label{equation: discrete variational formulation}
\begin{aligned}
a_h(\bm{u}_h,\bm{v}_h)-b_h(\bm{v}_h,p_h)&=(\bm{f},\bm{v}_h),~\forall \bm{v}_h\in\bm{V}_h,\\
b_h(\bm{u}_h,q_h)&=(g,q_h),~~\forall q_h\in P_h,
\end{aligned}
\end{equation}
where $a_h(\cdot,\cdot)$ and $b_h(\cdot,\cdot)$ are discrete versions of $a(\cdot,\cdot)$ and $b(\cdot,\cdot)$, respectively:
\[
a_h(\bm{u}_h,\bm{v}_h)=\nu\sum_{K\in\mathcal{T}_h}(\nabla\bm{u}_h,\nabla\bm{v}_h)_K+\alpha(\bm{u}_h,\bm{v}_h),
~b_h(\bm{v}_h,q_h)=(\mathrm{div}_h\,\bm{v}_h,q_h).
\]
Moreover, the norm $\|\cdot\|_{1,h}$ and the semi-norms $|\cdot|_{1,h}$, $\|\cdot\|_{a_h}$ are equipped by
\[
\|\bm{v}_h\|_{1,h}^2=\sum_{K\in\mathcal{T}_h}\|\bm{v}_h\|_{1,K}^2,
~|\bm{v}_h|_{1,h}^2=\sum_{K\in\mathcal{T}_h}|\bm{v}_h|_{1,K}^2,
~\|\bm{v}_h\|_{a_h}^2=a_h(\bm{v}_h,\bm{v}_h).
\]
Clearly, $\|\cdot\|_{a_h}$ is a norm on $\bm{V}_h$.

For each $K\in\mathcal{T}_h$ and $s>0$,
the nodal interpolation operator $\bm{\Pi}_K: [H^{1+s}(K)]^2\rightarrow \bm{V}_K$
is defined via $\sigma_j(\bm{\Pi}_K\bm{v})=\sigma_j(\bm{v})$, $j=1,2,\ldots,12$.
Like the scalar case, we have from Theorem \ref{th: vector unisol} that
\begin{equation}
\label{e: local interp err vector}
|\bm{v}-\bm{\Pi}_K\bm{v}|_{j,K}\leq Ch^{k-j}|\bm{v}|_{k,K},~\forall\bm{v}\in[H^k(K)\cap H^{1+s}(K)]^2,~j=0,1,~k=1,2.
\end{equation}
The global interpolation operator $\bm{\Pi}_h:~[H_0^1(\Omega)\cap H^{1+s}(\Omega)]^2\rightarrow \bm{V}_h$
is naturally set as $\bm{\Pi}_h|_K=\bm{\Pi}_K$.
Since $\bm{\Pi}_K$ preserves normal integral on $E\subset\partial K$ for all $K$,
we find through integrating by parts that
\begin{equation}
\label{e: preserve bh}
b_h(\bm{\Pi}_h\bm{v},q_h)=b(\bm{v},q_h),
~\forall\bm{v}\in[H_0^1(\Omega)\cap H^{1+s}(\Omega)]^2,~\forall q_h\in P_h.
\end{equation}

Owing to the Scott-Zhang smoothing strategy \cite{Scott1990},
$\bm{\Pi}_h$ can be modified into $\overline{\bm{\Pi}}_h$ interpolating continuously from $[H_0^1(\Omega)]^2$ to $\bm{V}_h$.
Meanwhile, (\ref{e: preserve bh}) holds for all $\bm{v}\in[H_0^1(\Omega)]^2$ if
$\bm{\Pi}_h$ is replaced by $\overline{\bm{\Pi}}_h$.
Hence, by Fortin's trick and (\ref{e: inf-sup}), the following discrete inf-sup condition is derived:
\begin{equation}
\label{e: disc inf-sup}
\sup_{\bm{v}_h\in\bm{V}_h}\frac{b_h(\bm{v}_h,q_h)}{\|\bm{v}_h\|_{1,h}}\geq\sup_{\bm{v}\in
[H_0^1(\Omega)]^2}\frac{b_h(\overline{\bm{\Pi}}_h\bm{v},q_h)}{\|\overline{\bm{\Pi}}_h\bm{v}\|_{1,h}}
\geq\sup_{\bm{v}\in[H_0^1(\Omega)]^2}\frac{b(\bm{v},q_h)}{C\|\bm{v}\|_1}\geq C\|q_h\|_0,~\forall q_h\in P_h.
\end{equation}
Then by Theorem 3.1 in \cite{Zhou2018}, (\ref{equation: discrete variational formulation}) has a unique solution $(\bm{u}_h,p_h)\in\bm{V}_h\times P_h$, and
\begin{equation}
\label{e: abstract error}
\begin{aligned}
\|\bm{u}-\bm{u}_h\|_{a_h}&\leq C\left(\inf_{\bm{v_h}\in\bm{Z}_h(g)}\|\bm{u}-\bm{v}_h\|_{a_h}
+\sup_{\bm{w}_h\in\bm{V}_h}\frac{E_h(\bm{u},p,\bm{w}_h)}{\|\bm{w}_h\|_{a_h}}\right),\\
\|p-p_h\|_0&\leq C\left[\|p-\mathcal{P}_hp\|_0+M^{1/2}\left(\inf_{\bm{v_h}\in\bm{Z}_h(g)}\|\bm{u}-\bm{v}_h\|_{a_h}
+\sup_{\bm{w}_h\in\bm{V}_h}\frac{E_h(\bm{u},p,\bm{w}_h)}{\|\bm{w}_h\|_{a_h}}\right)\right],
\end{aligned}
\end{equation}
where $\mathcal{P}_h$ is the $L^2$-projection operator from $L_0^2(\Omega)$ to $P_h$, $M=\max\{\nu,\alpha\}$ and
\[
\begin{aligned}
\bm{Z}_h(g)&=\{\bm{v}_h\in\bm{V}_h:~b_h(\bm{v}_h,q_h)=(g,q_h),~\forall q_h\in P_h\},\\
E_h(\bm{u},p,\bm{w}_h)&=\sum_{K\in\mathcal{T}_h}\left(-\nu\int_{\partial K}\frac{\partial\bm{u}}{\partial\bm{n}}\cdot
\bm{w}_h\,\mathrm{d}s+\int_{\partial K}p\bm{w}_h\cdot\bm{n}\,\mathrm{d}s\right).
\end{aligned}
\]

Now we are in a position to estimate each term in (\ref{e: abstract error}).
To this end, let $\bm{u}\in [H^2(\Omega)\cap H_0^1(\Omega)]^2$ be the weak velocity solution of
(\ref{equation: continuous variational formulation}).
It follows from (\ref{e: preserve bh}) that $\bm{\Pi}_h\bm{u}\in\bm{Z}_h(g)$,
and therefore by (\ref{e: local interp err vector})
\begin{equation}
\label{e: appr err}
\inf_{\bm{v_h}\in\bm{Z}_h(g)}\|\bm{u}-\bm{v}_h\|_{a_h}
\leq\|\bm{u}-\bm{\Pi}_h\bm{u}\|_{a_h}
\leq Ch(\nu^{1/2}+\alpha^{1/2}h)|\bm{u}|_2.
\end{equation}
On the other hand, by (\ref{e: vector tangent linear relation}) and (\ref{e: V_K}),
Theorem \ref{th: vector unisol} ensures
\[
\int_Eq[\bm{v}\cdot\bm{n}_E]_E\,\mathrm{d}s=0,~\forall q\in P_1(E),~\int_E[\bm{v}\cdot\bm{t}_E]_E\,\mathrm{d}s=0,~\forall E\in\mathcal{E}_h.
\]
If $p\in H^2(\Omega)$, then following the spirit of the consistency error analysis in \cite{Zhou2018}, we have
\begin{equation}
\label{e: consistency err}
E_h(\bm{u},p,\bm{w}_h)\leq\left\{
\begin{array}{l}
Ch(\nu^{1/2}|\bm{u}|_2+\nu^{-1/2}h|p|_2),~\mbox{if $\nu\neq0$};\\
Ch(\nu^{1/2}|\bm{u}|_2+\alpha^{-1/2}|p|_2),~\mbox{if $\alpha\neq0$}.\\
\end{array}
\right.
\end{equation}
Substituting (\ref{e: appr err}) and (\ref{e: consistency err}) into (\ref{e: abstract error}),
we will obtain the following convergence result.

\begin{theorem}
\label{th: brinkman result}
Let $(\bm{u},p)\in\left([H_0^1(\Omega)\cap H^2(\Omega)]^2\right)\times(L_0^2(\Omega)\cap H^2(\Omega))$ be the weak solution of
(\ref{equation: continuous variational formulation}).
The discrete solution of (\ref{equation: discrete variational formulation}) is given by
$(\bm{u}_h,p_h)\in\bm{V}_h\times P_h$.
Then the following error estimates hold:
\begin{equation}
\label{e: err est}
\begin{aligned}
\|\bm{u}-\bm{u}_h\|_{a_h}&\leq Ch\left[(\nu^{1/2}+\alpha^{1/2}h)|\bm{u}|_2+\min\{C_1\nu^{-1/2}h,C_2\alpha^{-1/2}\}|p|_2\right],\\
\|p-p_h\|_0&\leq Ch\left\{|p|_1
+M^{1/2}\left[(\nu^{1/2}+\alpha^{1/2}h)|\bm{u}|_2+\min\{C_1\nu^{-1/2}h,C_2\alpha^{-1/2}\}|p|_2\right]\right\},
\end{aligned}
\end{equation}
where we set $\alpha^{-1/2}=+\infty$ if $\alpha=0$, and $\nu^{-1/2}=+\infty$ if $\nu=0$.
\end{theorem}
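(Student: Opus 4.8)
The plan is to derive (\ref{e: err est}) by inserting the approximation estimate (\ref{e: appr err}) and the consistency estimate (\ref{e: consistency err}) directly into the abstract bound (\ref{e: abstract error}); no machinery beyond these two quasi-optimality inequalities and one standard projection estimate is required.

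First I would treat the velocity error, i.e. the first line of (\ref{e: abstract error}). The approximation term $\inf_{\bm{v}_h\in\bm{Z}_h(g)}\|\bm{u}-\bm{v}_h\|_{a_h}$ is controlled by $Ch(\nu^{1/2}+\alpha^{1/2}h)|\bm{u}|_2$ via (\ref{e: appr err}), whose admissibility rests on $\bm{\Pi}_h\bm{u}\in\bm{Z}_h(g)$ guaranteed by (\ref{e: preserve bh}). For the consistency term $\sup_{\bm{w}_h\in\bm{V}_h}E_h(\bm{u},p,\bm{w}_h)/\|\bm{w}_h\|_{a_h}$ I would invoke (\ref{e: consistency err}): its $\nu^{1/2}|\bm{u}|_2$ contribution is absorbed into the velocity term already present, while the pressure contribution demands slightly more care. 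When $\nu\neq0$ the first branch yields $Ch\nu^{-1/2}h|p|_2$, and when $\alpha\neq0$ the second branch yields $Ch\alpha^{-1/2}|p|_2$; whenever both parameters are nonzero both branches are valid, so one keeps the smaller and obtains the factor $\min\{C_1\nu^{-1/2}h,C_2\alpha^{-1/2}\}|p|_2$. With the conventions $\nu^{-1/2}=+\infty$ in the Darcy limit $\nu=0$ and $\alpha^{-1/2}=+\infty$ in the Stokes limit $\alpha=0$, this single $\min$ expression covers all admissible parameter regimes and produces the first line of (\ref{e: err est}).

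For the pressure I would begin from the second line of (\ref{e: abstract error}). The bracketed factor multiplying $M^{1/2}$ is precisely the quantity already bounded above, so it is replaced by the same expression. It remains to estimate $\|p-\mathcal{P}_hp\|_0$, where $\mathcal{P}_h$ is the $L^2$-projection onto the piecewise constant space $P_h$; the standard approximation property of piecewise constants on a shape-regular, quasi-uniform mesh gives $\|p-\mathcal{P}_hp\|_0\leq Ch|p|_1$. Collecting these two pieces delivers the second line of (\ref{e: err est}).

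Since both quasi-optimality inequalities are furnished in advance, the argument is essentially bookkeeping. The only point needing genuine attention is the uniform treatment of the two consistency branches together with the degenerate parameter limits, which the $\min$ and the $+\infty$ conventions resolve cleanly; I would also verify at each substitution that the generic constant $C$ stays independent of $h$, $\nu$ and $\alpha$. I do not anticipate any real obstacle beyond this bookkeeping.
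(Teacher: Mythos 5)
Your proposal is correct and follows essentially the same route as the paper: the paper's proof is precisely the substitution of the approximation bound (\ref{e: appr err}) and the two-branch consistency bound (\ref{e: consistency err}) into the abstract estimate (\ref{e: abstract error}), combined via the $\min$ and the $+\infty$ conventions, together with the standard piecewise-constant projection estimate $\|p-\mathcal{P}_hp\|_0\leq Ch|p|_1$ for the pressure term.
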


As the scalar case, boundary layers might appear if $\nu\rightarrow0$.
In such a Darcy limit, $|\bm{u}|_2$, $|p|_1$ and $|p|_2$ might explode.
We need a uniform convergence result instead of Theorem \ref{th: brinkman result}.
To this end, $\Omega$ is assumed to be a convex polygonal domain with vertices $\bm{x}_j$, $j=1,\ldots,N$ on $\partial\Omega$.
We also introduce the space
\[
H_+^1(\Omega)=\left\{q\in H^1(\Omega)\cap L_0^2(\Omega):
~\int_{\Omega}\frac{|q(\bm{x})|^2}{|\bm{x}-\bm{x}_j|^2}\,\mathrm{d}\bm{x}<\infty,~j=1,\ldots,N\right\}
\]
with the norm
\[
\|q\|_{1,+}^2=\|q\|_1^2+\sum_{j=1}^N\int_{\Omega}\frac{|q(\bm{x})|^2}{|\bm{x}-\bm{x}_j|^2}\,\mathrm{d}\bm{x}.
\]
The following result is an analogue counterpart of Theorem 3.3 in \cite{Zhou2018},
whose proof will be omitted.

\begin{theorem}
\label{th: uniform err}
Assume that $\Omega$ is convex, and $\alpha=1$, $\nu\leq1$ in (\ref{equation: continuous variational formulation}).
Moreover, the known terms $\bm{f}\in [H^1(\Omega)]^2$ and $g\in H_+^1(\Omega)$.
Let $(\bm{u},p)\in[H_0^1(\Omega)]^2\times L_0^2(\Omega)$ be the weak solution of
(\ref{equation: continuous variational formulation}).
The discrete solution of (\ref{equation: discrete variational formulation}) is given by
$(\bm{u}_h,p_h)\in\bm{V}_h\times P_h$.
Then we have the following uniform error estimate
\[
\|\bm{u}-\bm{u}_h\|_{a_h}+\|p-p_h\|_0\leq Ch^{1/2}\left(\|\bm{f}\|_1+\|g\|_{1,+}\right).
\]
\end{theorem}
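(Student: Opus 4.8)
The plan is to run the analysis through the abstract error estimate (\ref{e: abstract error}), whose constants are uniform in $\nu$ because the discrete inf-sup condition (\ref{e: disc inf-sup}) holds for every $\nu,\alpha$. With $\alpha=1$ the energy norm obeys $\|\bm{v}\|_{a_h}^2=\nu|\bm{v}|_{1,h}^2+\|\bm{v}\|_0^2$, so I must bound the three quantities appearing in (\ref{e: abstract error})---the approximation error $\inf_{\bm{v}_h\in\bm{Z}_h(g)}\|\bm{u}-\bm{v}_h\|_{a_h}$, the consistency error $\sup_{\bm{w}_h}E_h(\bm{u},p,\bm{w}_h)/\|\bm{w}_h\|_{a_h}$, and the pressure projection error $\|p-\mathcal{P}_hp\|_0$---each by $Ch^{1/2}(\|\bm{f}\|_1+\|g\|_{1,+})$. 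The obstruction is that the naive bounds (\ref{e: appr err}) and (\ref{e: consistency err}) contain the seminorms $|\bm{u}|_2$ and $|p|_2$, which are \emph{not} uniformly bounded as $\nu\to0$: a boundary layer of width $O(\nu^{1/2})$ develops along $\partial\Omega$, and the pressure acquires additional singularities at the corners $\bm{x}_j$ of the polygon.

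The analytic heart of the argument is a parameter-explicit, weighted regularity estimate, imported from the Darcy-limit theory underlying Theorem 3.3 of \cite{Zhou2018}. Concretely I would decompose $\bm{u}=\bm{u}^r+\bm{u}^\ell$ and $p=p^r+p^\ell$ into a regular part with $\nu$-independent control $|\bm{u}^r|_2+\|p^r\|_{1,+}\le C(\|\bm{f}\|_1+\|g\|_{1,+})$, and a boundary-layer part supported in an $O(\nu^{1/2})$-neighbourhood of $\partial\Omega$ whose energy is small, $\|\bm{u}^\ell\|_{a_h}+\|p^\ell\|_0\le C\nu^{1/4}(\|\bm{f}\|_1+\|g\|_{1,+})$, at the price of blown-up higher seminorms scaling like $|\bm{u}^\ell|_2\sim\nu^{-3/4}$ and $|p^\ell|_2\sim\nu^{-3/4}$. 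The weighted norm $\|g\|_{1,+}$ is exactly the ingredient needed to keep the corner contributions of $p$ finite uniformly in $\nu$, which is why the space $H_+^1(\Omega)$ enters the hypotheses; this regularity step is the main obstacle, and is precisely what is carried over from \cite{Zhou2018}.

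Granting the decomposition, each error term is estimated by splitting into the regular and layer pieces and balancing over the two regimes $h\le\nu^{1/2}$ and $h\ge\nu^{1/2}$. The regular pieces feed into (\ref{e: appr err})--(\ref{e: consistency err}) directly and yield clean $O(h)$ contributions, since $|\bm{u}^r|_2$ and $\|p^r\|_{1,+}$ are uniformly controlled. The layer pieces produce the half order: when $h\ge\nu^{1/2}$ the mesh cannot resolve the layer, so by stability of $\bm{\Pi}_h$ the layer's interpolation error is controlled by its own energy, $\|\bm{u}^\ell-\bm{\Pi}_h\bm{u}^\ell\|_{a_h}\le C\|\bm{u}^\ell\|_{a_h}\le C\nu^{1/4}(\ldots)\le Ch^{1/2}(\ldots)$; when $h\le\nu^{1/2}$ the layer is resolved, and the energy-seminorm part of the interpolation error $\nu^{1/2}h|\bm{u}^\ell|_2\sim h\nu^{-1/4}\le h^{1/2}(\ldots)$ gives the same order because $h\le\nu^{1/2}$. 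The crossover at $h\sim\nu^{1/2}$ is what forces the exponent $1/2$. The pressure consistency and projection terms are the delicate ones: the large $|p^\ell|_2$ must not be used directly, so I would instead employ the sharpened, weight-adapted consistency estimate of \cite{Zhou2018}---exploiting the weak normal and tangential continuity $\int_E q[\bm{v}\cdot\bm{n}_E]=0$ and $\int_E[\bm{v}\cdot\bm{t}_E]=0$ established in Section \ref{s: 1-form}---to bound these contributions in terms of $\|p\|_{1,+}$ rather than $|p|_2$, again with the half-order loss near the corners.

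Assembling the regular $O(h)$ terms and the layer/corner $O(h^{1/2})$ terms, and adding the projection bound $\|p-\mathcal{P}_hp\|_0\le Ch\|p\|_{1,+}$, gives $\|\bm{u}-\bm{u}_h\|_{a_h}+\|p-p_h\|_0\le Ch^{1/2}(\|\bm{f}\|_1+\|g\|_{1,+})$, as claimed. The only genuinely new input beyond Section \ref{s: 1-form} is the weighted Darcy-limit regularity, and once it is in hand the remaining steps are a regime-by-regime bookkeeping exercise; this is why the authors can legitimately defer to \cite{Zhou2018} and omit the details.
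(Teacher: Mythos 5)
First, a point of reference: the paper does not actually prove this theorem --- it states explicitly that the result ``is an analogue counterpart of Theorem 3.3 in \cite{Zhou2018}, whose proof will be omitted.'' So there is no internal argument to compare against; the only fair comparison is with the Mardal--Tai--Winther-type Darcy-limit analysis that the citation points to. Measured against that, your skeleton is the right one: parameter-explicit regularity with boundary-layer scalings (layer energy of order $\nu^{1/4}$, second derivatives of order $\nu^{-3/4}$), the role of the weighted space $H_+^1(\Omega)$ in taming the corner contributions, exploitation of the weak continuity $\int_E q[\bm{v}\cdot\bm{n}_E]\,\mathrm{d}s=0$ ($q\in P_1(E)$) and $\int_E[\bm{v}\cdot\bm{t}_E]\,\mathrm{d}s=0$ in the consistency error, and the regime balancing at $h\sim\nu^{1/2}$ that forces the exponent $1/2$. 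Your observation that the abstract estimate (\ref{e: abstract error}) has $\nu$-uniform constants (in particular $M^{1/2}=1$ under $\alpha=1$, $\nu\le1$) is also correct and necessary.

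The one concrete soft spot is the regularity statement you place at the ``analytic heart'': a decomposition $\bm{u}=\bm{u}^r+\bm{u}^\ell$ with $|\bm{u}^r|_2\le C(\|\bm{f}\|_1+\|g\|_{1,+})$ uniformly in $\nu$. This is stronger than what the Darcy-limit theory provides and is generically false on a polygon: the outer (Darcy) velocity is $\bm{u}^D=\bm{f}-\nabla p^D$ with $p^D$ solving a Neumann problem, and on a convex polygon $p^D$ is only in $H^2$ (corners obstruct $H^3$), so $\bm{u}^D$ is merely $H^1$-regular --- there is no $\nu$-uniform $H^2$ regular part of the velocity. The estimates actually available in the Mardal--Tai--Winther/\cite{Zhou2018} line are fractional-power bounds on the full solution, of the type $\|\bm{u}\|_{a}+\|p\|_1\le CR$ and $\nu^{1/4}\|\bm{u}\|_1+\nu^{3/4}\|\bm{u}\|_2\le CR$, combined with $H^2$ control of the Darcy \emph{pressure}; it is the pressure's regularity (via $\|p^D-\pi_E p^D\|_{0,E}\le Ch^{3/2}|p^D|_{2,\omega_E}$ paired with $\|[\bm{w}_h\cdot\bm{n}_E]\|_{0,E}\le Ch^{-1/2}\|\bm{w}_h\|_{0,\omega_E}$, the latter absorbed by the $\alpha$-term of $\|\cdot\|_{a_h}$), not an $H^2$ bound on the velocity, that rescues the pressure consistency term in the small-$\nu$ regime. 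With the regularity input corrected in this way, your regime-by-regime bookkeeping does go through; also note that the plain nodal interpolant $\bm{\Pi}_h$ is not defined on all of $[H^1]^2$, so the ``stability'' step in your $h\ge\nu^{1/2}$ regime should be run with the Scott--Zhang modification $\overline{\bm{\Pi}}_h$ already introduced in the paper.
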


\subsection{Finite element exact sequence}

In this section, we will see that the finite element spaces $W_h$, $\bm{V}_h$ and $P_h$ constitute a discrete de Rham complex.

\begin{theorem}
\label{th: discrete complex}
The following finite element sequence is exact.
\begin{equation}
\label{e: exact sequence}
\begin{tikzcd}[column sep=large, row sep=large]
0 \arrow{r} & W_h \arrow{r}{\curl_h}
& \bm{V}_h \arrow{r}{\mathrm{div}_h}& P_h \arrow{r}&0,
\end{tikzcd}
\end{equation}
where $\bm{\mathrm{curl}}_h|_K=\bm{\mathrm{curl}}$ on $K$.
\end{theorem}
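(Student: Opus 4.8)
The plan is to verify the three conditions that make (\ref{e: exact sequence}) exact: injectivity of $\curl_h$ at $W_h$, the identity $\mathrm{ran}(\curl_h)=\ker(\mathrm{div}_h)$ at $\bm{V}_h$, and surjectivity of $\mathrm{div}_h$ at $P_h$. The organizing tool I would record first is the pair of edgewise identities $\curl w\cdot\bm{t}=\partial w/\partial\bm{n}$ and $\curl w\cdot\bm{n}=-\partial w/\partial\bm{t}$, which hold for any smooth $w$ on any edge with the paper's convention $\bm{t}=\bm{n}^{\perp}$. These translate each defining condition of $W_h$ into the corresponding condition of $\bm{V}_h$ and are what make the whole diagram commute at the level of the local constraints (\ref{e: W_K}) and (\ref{e: V_K}).

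The first task, and the one I expect to be the main obstacle, is to show that $\curl_h$ actually maps $W_h$ into $\bm{V}_h$, since this is where the nodal structure and normal aggregation must be reconciled. Locally $\curl(W_K^-\oplus W_K^b)\subseteq\bm{V}_K^-\oplus\bm{V}_K^b$ is immediate from the definitions (for instance $\bm{V}_K^b=\curl W_K^b$ and each $\curl\phi_j\in\bm{V}_K^-$), and the identity $\curl w\cdot\bm{t}_i=\partial w/\partial\bm{n}_i$ shows that the tangential constraint (\ref{e: V_K}) for $\curl w$ is exactly the normal-gradient constraint (\ref{e: W_K}) satisfied by $w$; hence $\curl w|_K\in\bm{V}_K$. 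For the inter-element requirements, continuity of $\curl w$ at interior vertices and its vanishing at boundary vertices follow from the continuity and vanishing of $\nabla w$, while the flux condition $\int_E[\curl w\cdot\bm{n}_E]_E\,\mathrm{d}s=0$ reduces, through $\curl w\cdot\bm{n}_E=-\partial w/\partial\bm{t}_E$ and the fundamental theorem of calculus along $E$, to the difference of the (single-valued) vertex values of $w$ at the two endpoints of $E$; this vanishes on interior edges by vertex continuity and on boundary edges because $w$ vanishes at both boundary endpoints.

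The remaining inclusions are short. If $\curl_h w=\bm{0}$ then $w$ is piecewise constant, hence globally constant by vertex continuity on the connected mesh, hence $w=0$ by the boundary condition, so $\curl_h$ is injective. Since $\mathrm{div}\,\curl=0$ pointwise, $\mathrm{div}_h\curl_h=0$, giving $\mathrm{ran}(\curl_h)\subseteq\ker(\mathrm{div}_h)$. Surjectivity of $\mathrm{div}_h$ onto $P_h$ I would read off from the discrete inf-sup condition (\ref{e: disc inf-sup}): were some $q\in P_h$ orthogonal to $\mathrm{ran}(\mathrm{div}_h)$, then $b_h(\bm{v}_h,q)=0$ for all $\bm{v}_h\in\bm{V}_h$, forcing $q=0$, so $\mathrm{ran}(\mathrm{div}_h)=P_h$.

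Finally I would upgrade the inclusion at $\bm{V}_h$ to an equality by a dimension count. Reading off the global degrees of freedom gives $\dim W_h=3|\mathcal{V}_h^i|$, $\dim\bm{V}_h=|\mathcal{E}_h^i|+2|\mathcal{V}_h^i|$ (one normal flux per interior edge, the boundary fluxes being forced to zero, plus vertex values at interior vertices), and $\dim P_h=|\mathcal{T}_h|-1$. Euler's relation $|\mathcal{V}_h|-|\mathcal{E}_h|+|\mathcal{T}_h|=1$ together with $|\mathcal{V}_h^b|=|\mathcal{E}_h^b|$ for the boundary cycle yields $|\mathcal{V}_h^i|-|\mathcal{E}_h^i|+|\mathcal{T}_h|-1=0$, i.e. $\dim W_h=\dim\bm{V}_h-\dim P_h$. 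Then rank--nullity and the surjectivity just proved give $\dim\ker(\mathrm{div}_h)=\dim\bm{V}_h-\dim P_h=\dim W_h=\dim\mathrm{ran}(\curl_h)$, so the inclusion $\mathrm{ran}(\curl_h)\subseteq\ker(\mathrm{div}_h)$ must be an equality, which completes the proof of exactness.
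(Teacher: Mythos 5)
Your proof is correct and follows essentially the same route as the paper's: the inclusion $\curl_h W_h\subset\bm{V}_h$ via the correspondence between the normal-derivative constraint in (\ref{e: W_K}) and the tangential constraint in (\ref{e: V_K}), surjectivity of $\mathrm{div}_h$ from the discrete inf-sup condition (\ref{e: disc inf-sup}), and a dimension count with Euler's formula to upgrade $\mathrm{ran}(\curl_h)\subseteq\ker(\mathrm{div}_h)$ to equality. The only difference is cosmetic: you make explicit the injectivity of $\curl_h$ and the edge-flux verification, which the paper leaves implicit in the equality $\dim W_h=\dim(\curl_h W_h)$.
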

\begin{proof}
We have shown $\mathrm{div}_h\bm{V}_h\subset P_h$.
Moreover, we know that $\mathrm{div}_h$ is surjective due to the discrete inf-sup condition (\ref{e: disc inf-sup}).
Next, we show $\curl_h W_h\subset\bm{V}_h$.
On one hand, for all $K\in\mathcal{T}_h$,
owing to the definitions of $\bm{V}_K^-$ and $\bm{V}_K^b$,
one has $\curl\,(W_K^-\oplus W_K^b)\subset\bm{V}_K^-\oplus\bm{V}_K^b$.
Furthermore, by the proof of Lemma \ref{lemma: vector darcy},
the relation (\ref{e: V_K}) holds for $\bm{v}=\curl\,w$, $\forall w\in W_K$.
Thus we find $\curl\,W_K\subset\bm{V}_K$, $\forall K\in\mathcal{T}_h$.
On the other hand, $\forall w_h\in W_h$,
the definition of $W_h$ ensures the continuous conditions in the definition of $\bm{V}_h$ for $\bm{v}_h=\curl_hw_h$,
which gives $\curl_h W_h\subset\bm{V}_h$.
To verify $\bm{\mathrm{curl}}_hW_h=\bm{Z}_h :=\{\bm{v}_h\in\bm{V}_h:~\mathrm{div}_h\bm{v}_h=0\}$,
it suffices to show the dimensions of this two spaces are the same since $\bm{\mathrm{curl}}_hW_h\subset\bm{Z}_h$.
Let $N_{\mathcal{V}}^i$, $N_{\mathcal{E}}^i$ and $N_{\mathcal{K}}$ be the numbers of
interior vertices, interior edges and cells in $\mathcal{T}_h$, respectively.
Then by using Euler's formula $N_{\mathcal{V}}^i-N_{\mathcal{E}}^i+N_{\mathcal{K}}=1$, we have
\[
\begin{aligned}
\dim\bm{Z}_h&=\dim\bm{V}_h
-\dim\left(\mbox{div}_h\bm{V}_h\right)
=\dim\bm{V}_h-\dim P_h\\
&=(2N_{\mathcal{V}}^i+N_{\mathcal{E}}^i)-(N_{\mathcal{K}}-1)=3N_{\mathcal{V}}^i=\dim W_h=\dim(\curl_hW_h),
\end{aligned}
\]
which implies the exactness of the sequence.
\end{proof}

\begin{remark}
With a Scott-Zhang smoothing trick \cite{Scott1990} acting on $\nabla H_0^2(\Omega)$,
the interpolation operator $\mathcal{I}_h$ in (\ref{e: interp err scalar}) can be modified into $\overline{\mathcal{I}}_h$
to work on the whole $H_0^2(\Omega)$ rather than $H_0^2(\Omega)\cap H^{2+s}(\Omega)$ (see e.g.~\cite{Guzman2014}).
As a consequence, we have the following commutative diagram:
\begin{equation}
\label{e: commuting}
\begin{tikzcd}[column sep=large, row sep=large]
0 \arrow{r} & H_0^2(\Omega) \arrow{r}{\curl} \arrow{d}{\overline{\mathcal{I}}_h}
& \left[H_0^1(\Omega)\right]^2 \arrow{r}{\mathrm{div}} \arrow{d}{\overline{\bm{\Pi}}_{h}} &
L_0^2(\Omega)\arrow{r} \arrow{d}{\mathcal{P}_h} &0\\
0 \arrow{r} & W_h \arrow{r}{\curl_h}
& \bm{V}_h \arrow{r}{\mathrm{div}_h}& P_h \arrow{r}&0.
\end{tikzcd}
\end{equation}
\end{remark}

\begin{remark}
We end this section by remarking that,
the exact sequence (\ref{e: exact sequence}) and commuting diagram (\ref{e: commuting})
can be adapted to a more general mesh type,
namely, mixed meshes consisting of both triangles and quadrilaterals,
in light of the pseudo-$C^0$ property of $W_h$ and the pseudo-$H(\mathrm{div})$ property of $\bm{V}_h$.
In fact, for a quadrilateral cell $K$,
we can still select $W_K$ as in (\ref{e: W_K}) and $\bm{V}_K$ as in (\ref{e: V_K}).
But if $K$ is a triangle, the modified nonconforming Zienkiewicz element space due to Wang et al.~\cite{Wang2007}
will be a successful candidate for $W_K$,
and $\bm{V}_K$ can also be obtained in a similar manner as in Definition \ref{d: vector element} from $W_K$.
Then $W_h$, $\bm{V}_h$ are formulated as before,
and analogous counterparts of the error estimates Theorems \ref{th: converge scalar}, \ref{th: uniform err scalar},
\ref{th: brinkman result} and \ref{th: uniform err} are also appropriate.
\end{remark}

\section{Numerical examples}
\label{s: numerical examples}

Some numerical examples are provided in this section.
Let the solution domain $\Omega$ be the unit square $[0,1]^2$,
where three types of convex quadrilateral meshes are considered.
As for the first type,
each mesh $\mathcal{T}_h$ is generated by an $n\times n$ uniform rectangular partition.
Figure \ref{fig: subfig: square} provides an example.
Meshes of the second type consists of uniform trapezoids,
see Figure \ref{fig: subfig: tixing}.
As shown in Figure \ref{fig: subfig: random}, the random partitions are demonstrated as well,
which are generated by stochastically deforming the first-type partitions with at most 20\%.
In the following examples,
the 16-node Gauss quadrature rule is adopted when the entries of stiffness matrices are accumulated for all meshes.

\begin{figure}[!htb]
\centering
\subfigure[A mesh with uniform rectangular partition] {
\label{fig: subfig: square}
\includegraphics[scale=0.45]{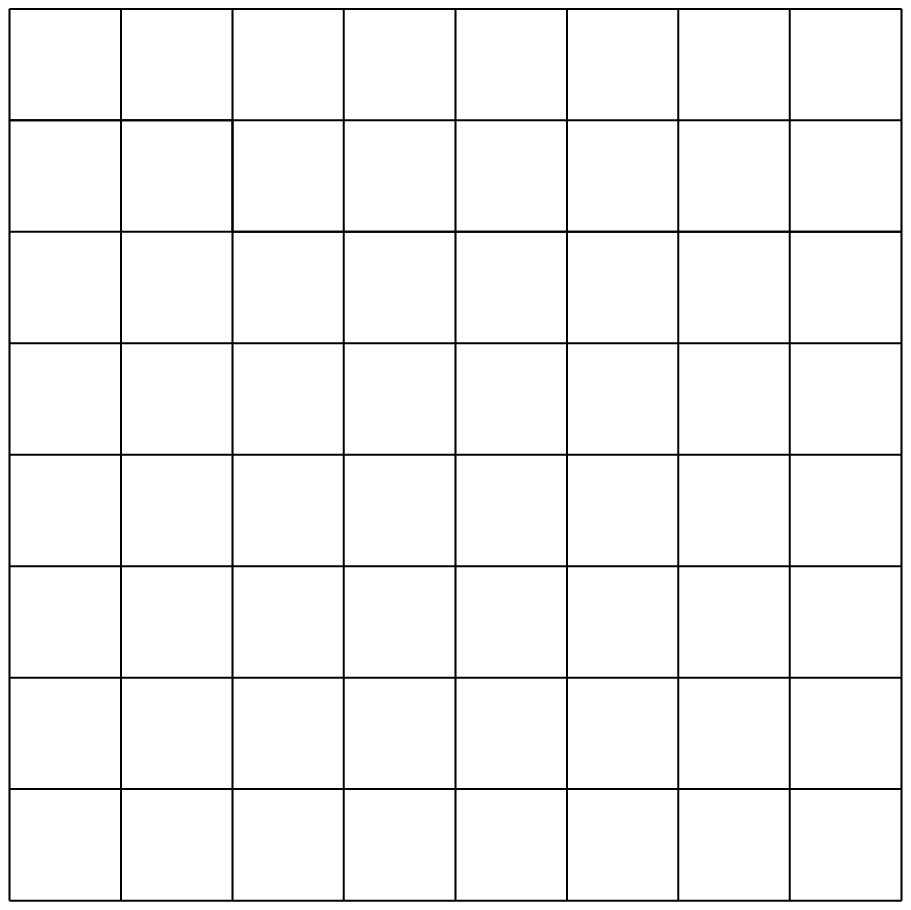}
}
\hspace{0.5cm}
\subfigure[A mesh with uniform trapezoidal partition] {
\label{fig: subfig: tixing}
\includegraphics[scale=0.45]{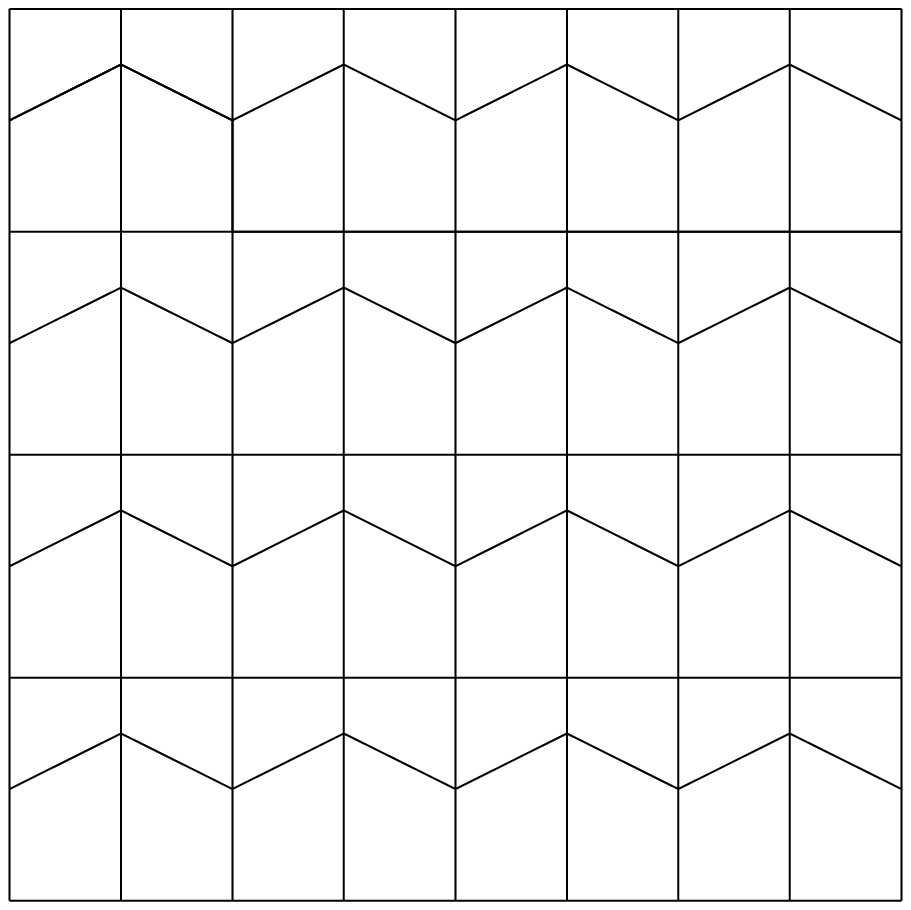}
}
\hspace{0.5cm}
\subfigure[A mesh with nonuniform randomly perturbed partition] {
\label{fig: subfig: random}
\includegraphics[scale=0.45]{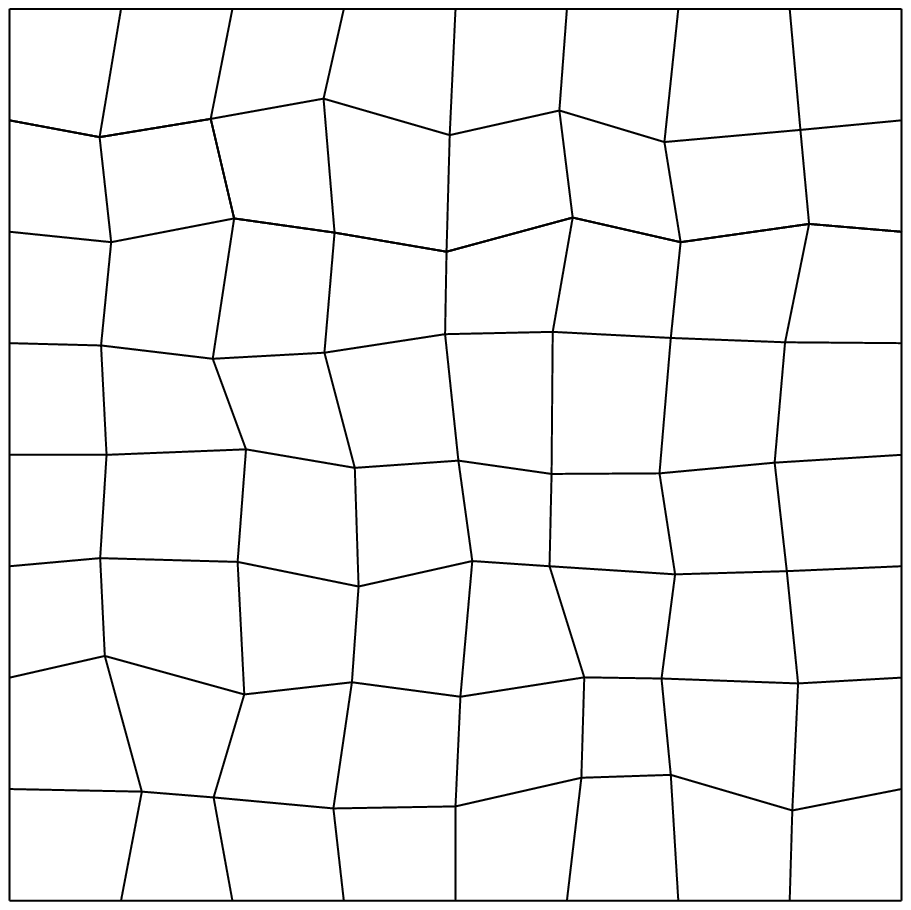}
}
\caption{Three types of quadrilateral partitions of $\Omega$.\label{fig: partitions}}
\end{figure}

Before the numerical experiment,
we give a brief analysis in terms the computational cost in comparison with some other elements working for the same problems
over the same meshes reviewed in the introduction part.
For fourth order elliptic singular perturbation problems,
we have reviewed two $H^2$-nonconforming elements in literature,
see \cite{Bao2018} for the $H^1$-conforming construction and \cite{Zhou2018} for the $H^1$-nonconforming one.
For our tested $n\times n$ meshes,
both elements are edge-based and the numbers of global DoFs are about $5n^2$.
As far as $W_h$ in this work is concerned, this number will be about $3n^2$,
reducing the computational costs in some degree benefiting from the nodal type structure.
The reduced $H^2$-conforming Fraijes de Veubeke-Sander element \cite{Ciarlet1978} has the same DoFs as ours,
but the shape function space is spline-based,
which is less preferred in practical applications than our polynomial selection.
For Brinkman problems, we investigate the $H^1$-conforming construction in \cite{Neilan2018}
and a nonconforming one in \cite{Zhou2018}.
Again, the number of global DoFs of $\bm{V}_h$ in this work is about $4n^2$,
much less than those of the two aforementioned examples:
$8n^2$ for the element in \cite{Neilan2018} and $6n^2$ for the other.

We now check the performance of the finite element space $W_h$ applied to fourth order elliptic singular perturbation problems.
The exact solution of (\ref{e: model problem scalar}) is arranged as
\begin{equation}
\label{e: example 1}
u=\sin^2(2\pi x)\sin^2(2\pi y).
\end{equation}
In Table \ref{t: example 1},
we list the errors in the energy norm $\normmm{u-u_h}_{\varepsilon,h}$ with different values of $\varepsilon$ and $h$.
The results for the biharmonic equation $\Delta^2u=f$ as well as the Poisson problem $-\Delta u=f$
with pure Dirichlet boundary conditions are also presented.
As predicted in Theorem \ref{th: converge scalar},
the first order convergence rate is observed for all possible $\varepsilon$.

\begin{table}[!htb]
\begin{center}
\begin{tabular}{p{1.5cm}<{\centering}p{1.5cm}<{\centering}p{1.5cm}<{\centering}
p{1.5cm}<{\centering}p{1.5cm}<{\centering}p{1.5cm}<{\centering}p{1.5cm}<{\centering}}
\toprule
$\varepsilon$ &  $n=4$   &  $n=8$   &  $n=16$  &  $n=32$  &  $n=64$  & order\\
\midrule
\multicolumn{4}{l}{rectangular meshes:} & \\
biharmonic & 2.909E0  & 1.315E0  & 5.913E-1 & 2.804E-1 & 1.368E-1 & 1.04 \\
$1$        & 2.913E0  & 1.315E0  & 5.914E-1 & 2.804E-1 & 1.368E-1 & 1.04 \\
$2^{-6}$   & 1.323E-1 & 3.136E-2 & 1.052E-2 & 4.537E-3 & 2.156E-3 & 1.07 \\
$2^{-12}$  & 1.236E-1 & 2.354E-2 & 5.019E-3 & 1.173E-3 & 2.866E-4 & 2.03 \\
Poisson    & 1.236E-1 & 2.354E-2 & 5.017E-3 & 1.171E-3 & 2.847E-4 & 2.04 \\
\midrule
\multicolumn{4}{l}{trapezoidal meshes:} & \\
biharmonic & 3.153E0  & 1.928E0  & 9.336E-1 & 4.562E-1 & 2.251E-1 & 1.02 \\
$1$        & 3.158E0  & 1.929E0  & 9.337E-1 & 4.563E-1 & 2.251E-1 & 1.02 \\
$2^{-6}$   & 1.161E-1 & 4.455E-2 & 1.658E-2 & 7.393E-3 & 3.551E-3 & 1.06 \\
$2^{-12}$  & 1.026E-1 & 3.177E-2 & 7.448E-3 & 1.833E-3 & 4.799E-4 & 1.93 \\
Poisson    & 1.026E-1 & 3.177E-2 & 7.444E-3 & 1.829E-3 & 4.772E-4 & 1.94 \\
\midrule
\multicolumn{4}{l}{randomly perturbed meshes:} & \\
biharmonic & 2.677E0  & 1.534E0  & 7.210E-1 & 3.553E-1 & 1.741E-1 & 1.03 \\
$1$        & 3.229E0  & 1.485E0  & 7.169E-1 & 3.563E-1 & 1.743E-1 & 1.03 \\
$2^{-6}$   & 1.286E-1 & 3.424E-2 & 1.292E-2 & 5.698E-3 & 2.749E-3 & 1.05 \\
$2^{-12}$  & 1.175E-1 & 2.662E-2 & 5.998E-3 & 1.453E-3 & 3.715E-4 & 1.97 \\
Poisson    & 1.164E-1 & 2.713E-2 & 5.971E-3 & 1.445E-3 & 3.687E-4 & 1.97 \\
\bottomrule
\end{tabular}
\caption{The errors $\normmm{u-u_h}_{\varepsilon,h}$ produced by $W_h$ applied to
 the given fourth order elliptic singular perturbation problem through (\ref{e: example 1}) over three kinds of meshes.
 \label{t: example 1}}
\end{center}
\end{table}

We then turn to the performance of the mixed finite element pair $\bm{V}_h\times P_h$
applied to the Brinkman problem.
We fix the parameter $\alpha=1$ in (\ref{e: model problem}) and test different $\nu\in(0,1]$.
The cases for the pure Darcy problem ($\nu=0$, $\alpha=1$) and Stokes problem ($\nu=1$, $\alpha=0$) are also investigated.
The exact solution of (\ref{e: model problem}) is determined by
\begin{equation}
\label{e: example 2}
\bm{u}=\curl\,(\sin^2(\pi x)\sin^2(\pi y)),~p=\sin(\pi x)-2/{\pi}.
\end{equation}
Tables \ref{t: example 2 v} and \ref{t: example 2 p} show the velocity and pressure errors, respectively.
The optimal convergence rate is also achieved for all possible parameters.

\begin{table}[!htb]
\begin{center}
\begin{tabular}{p{1.5cm}<{\centering}p{1.5cm}<{\centering}p{1.5cm}<{\centering}
p{1.5cm}<{\centering}p{1.5cm}<{\centering}p{1.5cm}<{\centering}p{1.5cm}<{\centering}}
\toprule
$\nu^{1/2}$ &  $n=4$   &  $n=8$   &  $n=16$  &  $n=32$  &  $n=64$  & order\\
\midrule
\multicolumn{4}{l}{rectangular meshes:} & \\
Stokes     & 3.186E0  & 1.503E0  & 6.926E-1 & 3.324E-1 & 1.631E-1 & 1.03 \\
$1$        & 3.190E0  & 1.503E0  & 6.927E-1 & 3.324E-1 & 1.631E-1 & 1.03 \\
$2^{-6}$   & 1.340E-1 & 3.340E-2 & 1.194E-2 & 5.327E-3 & 2.564E-3 & 1.05 \\
$2^{-12}$  & 1.236E-1 & 2.355E-2 & 5.019E-3 & 1.174E-3 & 2.874E-4 & 2.03 \\
Darcy      & 1.236E-1 & 2.354E-2 & 5.017E-3 & 1.171E-3 & 2.847E-4 & 2.04 \\
\midrule
\multicolumn{4}{l}{trapezoidal meshes:} & \\
Stokes     & 3.345E0  & 2.103E0  & 1.025E0  & 5.029E-1 & 2.486E-1 & 1.02 \\
$1$        & 3.349E0  & 2.103E0  & 1.025E0  & 5.029E-1 & 2.486E-1 & 1.02 \\
$2^{-6}$   & 1.175E-1 & 4.662E-2 & 1.789E-2 & 8.105E-3 & 3.916E-3 & 1.05 \\
$2^{-12}$  & 1.027E-1 & 3.181E-2 & 7.497E-3 & 1.884E-3 & 5.282E-4 & 1.84 \\
Darcy      & 1.026E-1 & 3.181E-2 & 7.492E-3 & 1.880E-3 & 5.257E-4 & 1.84 \\
\midrule
\multicolumn{4}{l}{randomly perturbed meshes:} & \\
Stokes     & 3.276E0  & 1.617E0  & 8.079E-1 & 4.022E-1 & 1.991E-1 & 1.01 \\
$1$        & 3.294E0  & 1.711E0  & 8.244E-1 & 3.953E-1 & 1.990E-1 & 0.99 \\
$2^{-6}$   & 1.378E-1 & 3.720E-2 & 1.392E-2 & 6.439E-3 & 3.117E-3 & 1.05 \\
$2^{-12}$  & 1.183E-1 & 2.676E-2 & 5.933E-3 & 1.492E-3 & 3.689E-4 & 2.02 \\
Darcy      & 1.317E-1 & 2.709E-2 & 6.025E-3 & 1.468E-3 & 3.778E-4 & 1.96 \\
\bottomrule
\end{tabular}
\caption{The velocity errors $\|u-u_h\|_{a_h}$ produced by $\bm{V}_h\times P_h$
applied to the Brinkman problem through (\ref{e: example 2}) over three kinds of meshes.
 \label{t: example 2 v}}
\end{center}
\end{table}

\begin{table}[!htb]
\begin{center}
\begin{tabular}{p{1.5cm}<{\centering}p{1.5cm}<{\centering}p{1.5cm}<{\centering}
p{1.5cm}<{\centering}p{1.5cm}<{\centering}p{1.5cm}<{\centering}p{1.5cm}<{\centering}}
\toprule
$\nu^{1/2}$ &  $n=4$   &  $n=8$   &  $n=16$  &  $n=32$  &  $n=64$  & order\\
\midrule
\multicolumn{4}{l}{rectangular meshes:} & \\
Stokes     & 4.593E-1 & 0.201E-1 & 5.810E-2 & 2.223E-2 & 1.027E-2 & 1.11 \\
$1$        & 4.616E-1 & 0.202E-1 & 5.827E-2 & 2.225E-2 & 1.027E-2 & 1.11 \\
$2^{-6}$   & 1.586E-1 & 7.995E-2 & 4.005E-2 & 2.003E-2 & 1.001E-2 & 1.00 \\
$2^{-12}$  & 1.586E-1 & 7.995E-2 & 4.005E-2 & 2.003E-2 & 1.001E-2 & 1.00 \\
Darcy      & 1.586E-1 & 7.995E-2 & 4.005E-2 & 2.003E-2 & 1.001E-2 & 1.00 \\
\midrule
\multicolumn{4}{l}{trapezoidal meshes:} & \\
Stokes     & 1.478E-1 & 5.967E-1 & 2.443E-1 & 1.158E-1 & 5.680E-2 & 1.03 \\
$1$        & 1.480E-1 & 5.978E-1 & 2.445E-1 & 1.158E-1 & 5.681E-2 & 1.03 \\
$2^{-6}$   & 1.569E-1 & 7.906E-2 & 3.960E-2 & 1.981E-2 & 9.907E-3 & 1.00 \\
$2^{-12}$  & 1.569E-1 & 7.906E-2 & 3.960E-2 & 1.981E-2 & 9.907E-3 & 1.00 \\
Darcy      & 1.569E-1 & 7.906E-2 & 3.960E-2 & 1.981E-2 & 9.907E-3 & 1.00 \\
\midrule
\multicolumn{4}{l}{randomly perturbed meshes:} & \\
Stokes     & 6.333E-1 & 2.721E-1 & 1.163E-1 & 5.723E-2 & 2.674E-2 & 1.10 \\
$1$        & 5.373E-1 & 3.083E-1 & 1.231E-1 & 5.165E-2 & 2.633E-2 & 0.97 \\
$2^{-6}$   & 1.620E-1 & 8.170E-2 & 4.115E-2 & 2.054E-2 & 1.030E-2 & 1.00 \\
$2^{-12}$  & 1.646E-1 & 8.191E-2 & 4.122E-2 & 2.056E-2 & 1.031E-2 & 1.00 \\
Darcy      & 1.585E-1 & 8.231E-2 & 4.119E-2 & 2.058E-2 & 1.030E-2 & 1.00 \\
\bottomrule
\end{tabular}
\caption{The pressure errors $\|p-p_h\|_0$ produced by $\bm{V}_h\times P_h$
applied to the Brinkman problem through (\ref{e: example 2}) over three kinds of meshes.
 \label{t: example 2 p}}
\end{center}
\end{table}

\end{document}